\documentclass[11pt]{amsart}
\usepackage[left=3.5cm, right=2.5cm, top=3cm, bottom=3cm]{geometry}

\usepackage{amsmath,amsthm,amsfonts,amssymb}


\newcommand*{\E}{\mathbb{E}}
\newcommand*{\Prob}{\mathbb{P}}

\newcommand*{\R}{\mathbb{R}}

\newcommand*{\Ind}{\mathbb{I}}

\newcommand*{\eps}{\varepsilon}
\newcommand*{\Id}{\mathbb{I}}
\newcommand{\setDiff}{\vartriangle}

\renewcommand{\phi}{\varphi}

\usepackage{ifthen}
\newcommand*{\equ}[2][\empty]{
	\begin{equation}
	\ifthenelse{\equal{#1}{\empty}}{}{\label{#1}}
	#2
	\end{equation}
}

\newcommand*{\al}[1]{\begin{align*}#1\end{align*}}


\def\wt{\widetilde}
\def\E{\mathbb E}
\def\R{\mathbb R}
\def\S{\mathbb S}
\def\Pb{\mathbb P}
\def\Z{\mathbb Z}
\def\Sc{\mathcal S}

\def\Mc{\mathcal M}
\def\Dc{\mathcal D}
\def\Ec{\mathcal E}
\def\Bc{\mathcal B}
\def\Ic{\mathcal I}
\def\Fc{\mathcal F}
\def\Hc{\mathcal H}
\def\Kc{\mathcal K}
\def\ind{{\mathbb I}}

\newtheorem{theorem}{Theorem}
\newtheorem*{theorem*}{Theorem}

\newtheorem{lemma}{Lemma}
\newtheorem{proposition}{Proposition}

\theoremstyle{definition}


\begin{document}

\title[Gaussian processes, bridges and membranes]{Gaussian processes, bridges and membranes \\extracted from selfsimilar random fields}

\keywords{self-similarity, generalized random field, fractional Brownian motion, Gaussian membrane}
\subjclass{60G15, 60G60}

\author{Maik G\"orgens}
\author{Ingemar Kaj}
\address{Department of Mathematics, Uppsala University}
\curraddr{P.O. Box 480, 751 06 Uppsala, Sweden}
\email{maik@math.uu.se, ikaj@math.uu.se}

\date{\today}

\begin{abstract}
We consider the class of selfsimilar Gaussian generalized random
fields introduced in Dobrushin \cite{D}. These fields are indexed by
Schwartz functions on $\R^d$ and parametrized by a self-similarity
index and the degree of stationarity of their increments.  We show
that such Gaussian fields arise in explicit form by letting Gaussian
white noise, or Gaussian random balls white noise, drive a shift and
scale shot-noise mechanism on $\R^d$, covering both isotropic and
anisotropic situations.  In some cases these fields allow indexing
with a wider class of signed measures, and by using families of signed
measures parametrized by the points in euclidean space we are able to
extract pointwise defined Gaussian processes, such as fractional
Brownian motion on $\R^d$.  Developing this method further, we construct
Gaussian bridges and Gaussian membranes on a finite domain, which
vanish on the boundary of the domain. 
\end{abstract}

\maketitle


\section{Introduction}

The main purpose of this work is to propose a method for constructing
a variety of Gaussian random processes on $\R^d$ by pointwise
evaluation of Gaussian selfsimilar random fields.  We will work with
zero mean Gaussian fields $X$ defined with respect to Schwartz functions $\Sc$
on $\R^d$ or, more generally, with respect to a class of signed
measures $\Mc$ on the Borel sets $\Bc(\R^d)$, writing $\phi\mapsto
X(\phi)$, $\phi\in\Sc$, and $\mu\mapsto X(\mu)$, $\mu\in \Mc$.
Defining the dilations $\phi_c$ of $\phi$ and $\mu_c$ of $\mu$, by
\[
\phi_c(x)=c^{-d}\phi(c^{-1}x),\quad  x\in \R^d, 
\qquad \mu_c(A)=\mu(c^{-1}A), \quad A\subset \Bc(\R^d), 
\]
a random field is said to be selfsimilar with self-similarity
index $H$, if 
\[
X(\phi_c)\stackrel{d}{=} c^{H} X(\phi), \quad 
X(\mu_c)\stackrel{d}{=} c^{H} X(\mu), \quad c>0. 
\]
Dobrushin \cite{D}, pioneered a theory of generalized random fields
with $r$th order stationary increments, and characterized all Gaussian
selfsimilar random fields on $\R^d$ by providing a representation of
the covariance functional $C(\phi,\psi)=\mathrm{Cov}(X(\phi),X(\psi))$
parametrized by $r$ and $H$.  We will use special instances of such
random fields $\mu\mapsto X(\mu)$ with $H>0$ and extract Gaussian
processes $(X_t)_{t\in \R^d}$ by putting $X_t=X(\mu_t)$ for a suitably
chosen family of indexing measures $(\mu_t)_{t\in \R^d}$.

Gaussian white noise $M_d(dx)$, which is the case $r=0$ and $H=-d/2$, is
such that $M_d(\phi)$ is a zero mean Gaussian random field with covariance
$C(\phi,\psi)=\int_{\R^d} \phi(x)\psi(x)\,dx$.  Gaussian random balls
white noise is a class of isotropic, generalized random fields
$W_\beta$, such that for a suitable family $\Mc_\beta$ of signed
measures,
\[
W_\beta(\mu)=\int_{\R^d\times \R_+} \mu(B(x,u))\,M_\beta(dx,du),\quad
\mu\in\Mc_\beta,
\]
where $B(x,u)$ is the Euclidean ball centered in $x$ with radius $u$
and $M_\beta(dx,du)$ is Gaussian white noise on $\R^d\times
[0,\infty)$ with control measure $\nu(dx,du)=dx\,u^{-\beta-1}du$. Such
  fields are known to be well-defined for $d-1<\beta<d$ and
  $d<\beta<2d$ and $W_\beta$ is selfsimilar with index
  $H=(d-\beta)/2\in (-d/2,0)\cap (0,1/2)$,
  \cite{BEK2010},\cite{KLNS2007}.  These classes of selfsimilar random
  fields may be recognized as the cases $r=0$, $-d/2<H<0$ and $r=1$,
  $0<H<1/2$, respectively, of isotropic fields in Dobrushin's
  characterization. By considering the Riesz transform 
\[
(-\Delta)^{-m/2}\phi(x)=\int_{\R^d} |x-y|^{-(d-m)}\phi(y)\,dy,\quad 0<m<d,
\]
and random fields defined by
\[
X(\phi)=W_{\beta}((-\Delta)^{-m/2}\phi),\quad 
\]
for a suitably restricted class of test functions $\phi$, it is
also possible to extend the range of the self-similarity index $H$
covered by random balls models to any value
$H\not=\Z$ if $d\ge 2$ and $H\not=\frac{1}{2}\Z$ if $d=1$, see \cite{BEK2010}. 

In this work we present a more general construction of Gaussian
selfsimilar shot noise random fields, which naturally includes
anisotropic models. We apply the same Gaussian white noises, $M_d$ and
$M_\beta$ as above, use the method of indexing random fields with a
class of signed measures, and extend the range of self-similarity index
with the help of the Riesz transform. These tools allow us to build,
in particular, Gaussian selfsimilar random fields $\mu\mapsto X(\mu)$
with index $H>0$, and apply to them a family of measures $(\mu_t)_{t \in \R^d}$.
By extracting the random fields in this manner, we obtain pointwise
defined random processes
\[
t\mapsto Y_t=X(\mu_t),\quad t\in \R^d,
\]
which inherit relevant properties from the underlying random fields.
The guiding example is fractional Brownian motion $B_H(t)$,
$t\in\R^d$, with $0<H<1$, which we extract from an appropriate random
field by applying $\mu_t=\delta_t-\delta_0$ and/or
$\mu_t=(-\Delta)^{-m/2}(\delta_t-\delta_0)$ with a suitable $m$.  As a
byproduct we obtain a new representation of fractional Brownian motion
in terms of $M_\beta$, which may be compared to the well-balanced
representation that results from using $M_d$. To illustrate isotropy
and anisotropy in natural situations, we also compare the random balls
construction with a random cylinder model, which leads to a comparison
between fractional Brownian motions and fractional Brownian sheets.  

To investigate further the range of applicability of the briefly
explained extraction principle, we consider for the one-dimensional
case $d=1$ construction of Gaussian bridges on an interval of the real line and
construction of Volterra processes. In higher dimensions we propose
the construction of membranes on a bounded domain $D$ in
$\R^d$, as Gaussian processes $X_t$, $t\in D$, such that $X_t$ converges in probability to $0$ as
$t$ tents to $\partial D$. Finally, we discuss membranes obtained from Gaussian
random balls white noise, which is thinned by a hard boundary in the
sense that balls that do not fall entirely within the domain are
discarded.

Our presentation is organized as follows. In the next Section 2 we
give preliminaries on Gaussian random measures and fields including an
account of Dobrushin's characterization of selfsimilar random fields.
In Section 3 we present our main results on Gaussian shot noise random
fields as Theorem \ref{thmshotnoise}, devoted to fields generated by a
wide range of pulse functions and random balls white noise $M_\beta$,
and Theorem \ref{thmsingularshot}, which instead applies a singular
shot function $h_\beta$ and regular white noise $M_d$. The discussion
on random cylinder models is included as a separate
subsection. Section 4 contains our account of the extraction method
and the various results on fractional Brownian motion, Gaussian
bridges, Volterra processes and membranes constructed by soft boundary
thinning of the harmonic measure. Finally, Section 5 is devoted to
membranes generated by hard boundary thinning.


\section{Preliminaries on Gaussian random measures and fields}

Let $(D,\Dc,\nu)$ be a measure space and let $\Dc_\nu=\{A\in\Dc:
\nu(A)<\infty\}$ denote the set of measurable sets with finite
measures.  A Gaussian stochastic measure on $(D,\Dc,\nu)$ is a family
of centered Gaussian random variables $Z(A)$, $A\in\Dc_\nu$, such that
\[
\mathrm{Cov}(Z(A),Z(B))=\nu(A\cap B), \quad A,B\in \Dc_\nu,
\]
and the corresponding Gaussian stochastic integral $f\mapsto \int
f\,dZ$ is the linear isometry $f\mapsto \Ic(f)$ of $L^2(D,\Dc,\nu)$
into a Gaussian Hilbert space $H$, defined by $\Ic(\ind_A)=Z(A)$,
$A\in \Dc_\nu$, \cite{SJ} Ch.\ 7.2.  Our main examples will be the
Euclidean case $D=\R^d$ with control measure $\nu(dx)$ which is
uniform or absolutely continuous with respect to Lebesgue measure on
$\R^d$, and simple product spaces, such as $D=\R^d\times \R_+$
equipped with a product measure $\nu(dx,du)=dx\,\nu_\gamma(du)$, where
$\nu_\gamma(du)=u^{-\gamma-1}\,du$ is a power law measure on the real
positive line.


\subsection*{Gaussian white noise on $\R^d$}

We denote by $M_d(dx)$ the Gaussian stochastic measure on
$(\R^d,\Bc(\R^d),dx)$, the $d$-dimensional Euclidean space with the
Borel $\sigma$-algebra $\Bc(\R^d)$ and Lebesgue control measure $dx$.  The
stochastic integral with respect to $M_d$ is the linear map $f\mapsto
\Ic(f)=\int_{\R^d} f(x)\,M_d(dx)$ defined as an isometry from
$L^2(\R^d,\Bc(\R^d),dx)$, equipped with the inner product norm
$\|\cdot\|=\sqrt{\langle \,,\,\rangle}$, where $\langle f,g\rangle=\int
fg\,dx$, into a Gaussian space $L^2(\Omega,\Fc,\Pb)$.  Let $\E$ be the
expectation operator associated with $\Pb$. Since
\[
\int_{\R^d} f(x)\,M_d(dx) \int_{\R^d}
g(y)\,M_d(dy)=\int_{\R^d}f(x)g(x)\,dx,
\] 
the covariance functional $\E(\Ic(f)\,\Ic(g))=\langle
f,g\rangle$, is given by the ordinary inner product of $L^2$
functions.  The same construction works in greater generality,
such as anisotropic white noise with control measure $w(x)\,dx$ for a
nonnegative weight function $w$ and covariance functional given by the
inner product of the weighted space $L_2(\R^d,w\,dx)$.


\subsection*{Gaussian Hilbert space}

Let $\Sc$ be the space of real, rapidly decreasing and smooth Schwartz
functions on $\R^d$. The continuous, bilinear form $\langle
\,,\,\rangle$ is symmetric, semi-definite and non-degenerate on
$\Sc$. Hence $(\Sc,\langle \,,\,\rangle)$ is a pre-Hilbert space with
inner product $\langle f,g\rangle$ for which the completion to a
Hilbert space is the usual space $L^2(\R^d)$ of real-valued
square-integrable functions on $\R^d$.  Also, by Minlos's theorem,
$\langle f,g\rangle$ corresponds to a unique Gaussian measure $\Pb$ on
the space $\Sc'$ of real tempered distributions, the dual space of
$\Sc$. Indeed, we obtain a Gaussian Hilbert space $H\subset L^2(\Pb)$
such that the linear functional $f\mapsto u(f)$ on $\Sc'$ is an
isometry which defines the Gaussian white noise measure on $\Sc'$. As
a Gaussian field on an $L^2$-space, white noise on generalized
Schwartz distributions may be regarded as the stochastic integral
$f\mapsto \int f(x)\,M_d(dx)$, cf.\ \cite{SJ}, Ex.\ 1.16, Ex.\ 7.24.


\subsection*{Stationary Gaussian random fields}

We write $|j|=\sum_{k=1}^d j_k$ for each $d$-dimensional multi-index $j=(j_1,\dots,j_d)$ and $x^j=\prod_{k=1}^d x_k^{j_k}$,
$x=(x_1,\dots,x_d)\in \R^d$, and consider the sequence $\Sc_r$,
$r=0,1,\dots$, of closed subspaces of $\Sc$, such that 
\[
\Sc_r=\Big\{\phi\in \Sc: \int_{\R^d} x^j\phi(x)\,dx=0,\, |j|<r\Big\},\quad
r=1,2\dots,\quad\Sc_0=\Sc.
\]
A Gaussian random field over $\Sc_r$ is a continuous, linear functional
$X: \Sc_r\to \R$, such that $X(\phi)$ is a Gaussian random variable
for each $\phi\in \Sc_r$.  The field is said to be isotropic if the
distribution is invariant under rotations of $\R^d$ and stationary if
it is invariant under translations.  A Gaussian random field $X$ over
$\Sc_0$ is said to have stationary $r$th increments if the restriction
of $X$ to $\Sc_r$ is a stationary Gaussian random field over $\Sc_r$. 
Let $\Ec$ be the symmetric semidefinite bilinear form on $\Sc_r$,
defined by $\Ec(\phi,\psi)=\E X(\phi)X(\psi)$. Then $(\Sc_r,\Ec)$ is a
pre-Hilbert space with inner product $\Ec(\phi,\psi)$, which may be
completed to a Hilbert space $\Sc_\Ec$ with norm
$\sqrt{\Ec(\phi,\phi)}$, and then $\phi\mapsto X(\phi)$ is an isometry
of $\Sc_\Ec$ onto a Gaussian Hilbert space in $\Sc_r'$.  Conversely, by
Minlos's theorem, any continuous bilinear semidefinite symmetric form
gives rise to a unique Gaussian field on $\Sc_r'$. 

More generally, we may consider Gaussian random fields defined on a
space of measures. Let $(\Mc,\|\cdot\|)$ denote the normed space of
signed measures $\mu$ on $\R^d$ with variation measure $|\mu|$, such
that the total variation norm is finite,
$\|\mu\|=|\mu|(\R^d)<\infty$. We put $\Mc_0=\Mc$ and for $r=1,2\dots$,
\begin{equation}\label{defspaceofmeasures}
\Mc_r=\left\{\mu\in \Mc: \int_ {\R^d} |x|^{r-1}\,|\mu|(dx)<\infty, 
\int_ {\R^d} x^j\,\mu(dx)=0,\; |j|<r\right\}.
\end{equation}
The subspaces $\Mc_r$ are closed under translations $\mu(A)\mapsto
\mu(A-s)$, $s\in\R^d$, $A\in\Bc(\R^d)$.  In this framework a Gaussian
random field $X$ over $\Mc _r$ is defined in analogy to those over
$\Sc_r$, and the notions of isotropy, translation invariance and $r$th
order stationary increments carry over.  Moreover, by completion one
obtains a Gaussian Hilbert space $\Mc_\Ec$ and an isometry $\mu\mapsto X(\mu)$
onto a Gaussian Hilbert space in the dual space of distributions, cf.\
\cite{SJ} Def.\ 1.18, and \cite{BEK2010} Sect.\ 3.1.  


\subsection*{The M.\ Riesz potential kernel}

Let $\Delta=\partial^2/\partial x_1^2+\dots+\partial^2/\partial x_d^2$
be the usual Laplacian operator on $\R^d$. The Fourier transform
$\widehat{\Delta\phi}$, $\phi\in\Sc(\R^d)$, satisfies
\[
\widehat{\Delta\phi}(\xi)=-|\xi|^2\widehat\phi(\xi),\quad \xi\in \R^d.
\]
Then, for any $m\in \Z$, the power operators $(-\Delta)^{-m/2}$ of
the Laplace operator may be defined formally using the Fourier transform, by 
\begin{equation}\label{rieszfourier}
\widehat{(-\Delta)^{-m/2}\phi}(\xi)=|\xi|^{-m}\widehat\phi(\xi),\quad
\xi\in \R^d. 
\end{equation}
In the context of random fields the family of operators
$(-\Delta)^{-m/2}$, $m\in\Z$, can be given a precise meaning as linear
homeomorphisms defined on the intersection space
$\Sc_\infty=\cap_{r\ge 0} \Sc_r$, see \cite{BEK2010}. For $1\le m\le d-1$,
and more generally for a non-integer parameter $m$, $0<m<d$, the
application $(-\Delta)^{-m/2}\phi$ is well-defined for $\phi\in\Sc$
and can be realized as a fractional integral with respect to the Riesz
kernel, given by
\[
(-\Delta)^{-m/2}\phi(x)=C_{m,d}\int_{\R^d}
|x-y|^{-(d-m)}\phi(y)\,dy,\quad 
C_{m,d}=\frac{\Gamma((d-m)/2)}{\pi^{d/2}2^m\Gamma(m/2)}.
\]
In one dimension, $d=1$, this extends naturally by putting 
$(-\Delta)^{-1/2}\phi(x)=\int_x^\infty \phi(y)\,dy$.  

For signed measures in $\mu\in\Mc$ we will understand 
$(-\Delta)^{-m/2}\mu$ to be the map generated by the Riesz potential
of order $m$,  defined by  
\[
(-\Delta)^{-m/2}\mu(dx)=C_{m,d}\int_{\R^d}
|x-y|^{-(d-m)}\,\mu(dy)\,dx. 
\]
For the one-dimensional case, $(-\Delta)^{-1/2}\mu(dx)=\int_x^\infty
\,\mu(dy)\,dx$.  The Riesz potential of order $m$ is finite almost
everywhere if and only if \cite{landkof}
\[
\int_{\{y\in\R^d:\, |y|>1\}} \frac{\mu(dy)}{|y|^{d-m}}<\infty, 
\]
and this condition will be satisfied for all measures $\mu$ considered
here. With regards to the Riesz kernel we will make frequent use of the
composition rule
\begin{equation}\label{rieszcomp}
\int_{\R^d} \frac{C_{m_1,d}}{|y-x|^{d-m_1}}\frac{C_{m_2,d}}{|y'-x|^{d-m_2}}\,dx 
=\frac{\mathrm{C_{m_1+m_2,d}}}{|y-y'|^{d-m_1-m_2}},
\end{equation}
valid for $0<m_1,m_2<d$, $m_1+m_2<d$.


\subsection*{Selfsimilar Gaussian random fields}

For $\phi\in \Sc$ let $\phi_c$ be the dilation defined by
$\phi_c(x)=c^{-d}\phi(c^{-1}x)$, $c\ge 0$.  Clearly, $\phi_c\in \Sc_r$
if $\phi\in \Sc_r$.  A random field $X$ over $\Sc_r$ is said to be
selfsimilar with index $H$, or $H$-selfsimilar, if $X(\phi_c)$ has the
same distribution as $c^H X(\phi)$, $\phi\in \Sc_r$. Similarly, for
$\mu\in \Mc(\R^d)$ define $\mu_c$ by $\mu_c(B)=\mu(B/c)$,
$B\in\Bc(\R^d)$.   We will sometimes write $\mu\mapsto X(\mu)$ for the
mapping of a random field even if the space of measures coincides with
the absolutely continuous signed measures $\mu(dx)=\phi(x)\,dx$,
$\phi\in \Sc_r$.  In this notation, a random field is $H$-selfsimilar
if $X(\mu_c)$ has the same distribution as $c^H\,X(\mu)$, for all
relevant $\mu$.

\begin{theorem}[Dobrushin '79 \cite{D}] \label{thmdobrushin}
Fix $r\ge 0$. A Gaussian random field
  $X$ on $\Sc_r$ is stationary and $H$-selfsimilar if and only if the
  covariance functional $C(\phi,\psi)={\rm Cov}(X(\phi),X(\psi))$ is
  given by 
\begin{align*}
C(\phi,\psi)&=\sum_{|j|=|k|=r} a_{jk}\int_{\R^d} x^j\phi(x)\,dx\int_{\R^d}
y^k\psi(y)\,dy\\
&\qquad \qquad +\int_{\S^{d-1}}\int_0^\infty \widehat \phi(u\theta)\overline{\widehat
  \psi(u\theta)} u^{-2H-1}\,du\,\sigma(d\theta),
\end{align*}
where the matrix $(a_{jk})$ is symmetric and nonnegative definite and
$\sigma(d\theta)$ is a finite, positive, and reflection-invariant
measure on the unit sphere $\S^{d-1}$ in $\R^d$.  Here, if $H>r$ then
$X\equiv 0$, if $H=r$ then $\sigma(d\theta)=0$ and if $H<r$ then
$(a_{jk})=0$.
\end{theorem}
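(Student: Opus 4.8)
The plan is to prove the two implications separately: sufficiency by direct verification that the displayed functional is an admissible, stationary, $H$-selfsimilar covariance, and necessity by combining the Bochner--Schwartz spectral representation of a stationary generalized field with the homogeneity forced by selfsimilarity.

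For the ``if'' direction, suppose $C(\phi,\psi)$ is given by the displayed formula with $(a_{jk})$ symmetric nonnegative definite and $\sigma$ a finite, positive, reflection invariant measure on $\S^{d-1}$. First I would check that $C$ is a genuine covariance functional on $\Sc_r$. The first sum is $(a_{jk})$ evaluated on the vector of moments $\big(\int_{\R^d} x^j\phi(x)\,dx\big)_{|j|=r}$, hence symmetric and nonnegative; the second term with $\phi=\psi$ equals $\int_{\S^{d-1}}\int_0^\infty |\widehat\phi(u\theta)|^2 u^{-2H-1}\,du\,\sigma(d\theta)\ge 0$, and it is finite because $\widehat\phi$ decays rapidly at infinity while for $\phi\in\Sc_r$ the vanishing moments of order $<r$ give $\widehat\phi(u\theta)=O(u^r)$ as $u\to 0$, so the integrand is $O(u^{2r-2H-1})$ near the origin, integrable exactly when $H<r$, or when $H=r$ and $\sigma=0$. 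Continuity of $C$ on $\Sc_r\times\Sc_r$ together with positive semidefiniteness then give, via Minlos's theorem, the existence of the Gaussian field. For stationarity: with $\widehat{\phi(\cdot-s)}(\xi)=e^{-is\cdot\xi}\widehat\phi(\xi)$ the second term is translation invariant, and on $\Sc_r$ one has $\int x^j\phi(x-s)\,dx=\int (x+s)^j\phi(x)\,dx=\int x^j\phi(x)\,dx$ when $|j|=r$, since all lower moments vanish, so the first term is translation invariant too. For selfsimilarity: from $\widehat{\phi_c}(\xi)=\widehat\phi(c\xi)$ and the substitution $v=cu$ the spectral integral scales by $c^{2H}$, whereas $\int x^j\phi_c(x)\,dx=c^{|j|}\int x^j\phi(x)\,dx=c^r\int x^j\phi(x)\,dx$ makes the first term scale by $c^{2r}$; these are compatible with $C(\phi_c,\psi_c)=c^{2H}C(\phi,\psi)$ exactly in the three cases listed, and the associated Gaussian field is then $H$-selfsimilar.

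For the ``only if'' direction, let $X$ be stationary and $H$-selfsimilar on $\Sc_r$ with covariance $C$, a continuous symmetric positive semidefinite bilinear form. By the Bochner--Schwartz theorem for stationary generalized random fields (see Gelfand--Vilenkin, or \cite{D}), there is a tempered distribution $T$ with $C(\phi,\psi)=\la T,\widehat\phi\,\overline{\widehat\psi}\ra$ for $\phi,\psi\in\Sc_r$, which on $\R^d\setminus\{0\}$ is a positive tempered measure $F$; since the test products $\widehat\phi\,\overline{\widehat\psi}$ vanish to order $2r$ at the origin, $T$ is determined only modulo distributions supported at $\{0\}$ of order $<2r$, so I would write $T=F+T_0$ with $T_0$ a finite combination of derivatives of $\delta_0$ of order $\ge 2r$. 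Selfsimilarity $C(\phi_c,\psi_c)=c^{2H}C(\phi,\psi)$ together with $\widehat{\phi_c}(\xi)=\widehat\phi(c\xi)$ says that $T$ is homogeneous of degree $-(d+2H)$ against the products $\widehat\phi\,\overline{\widehat\psi}$. A positive measure on $\R^d\setminus\{0\}$ homogeneous of that degree has, in polar coordinates, the form $u^{-2H-1}\,du\,\sigma(d\theta)$ for a positive measure $\sigma$ on $\S^{d-1}$, which is finite (compactness of the sphere and finiteness of $C$) and reflection invariant (because $C$ is real and symmetric, so only the even part of $F$ contributes); finiteness of $C(\phi,\phi)$ for all $\phi\in\Sc_r$ then forces $H<r$, or $H=r$ with $\sigma=0$. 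For $T_0$: since $\pa^\alpha\delta_0$ is homogeneous of degree $-(d+|\alpha|)$, homogeneity of $T_0$ forces all its derivatives to have order $|\alpha|=2H$, so $2H$ is a nonnegative integer, and on $\Sc_r$ the induced form is $\sum_{|j|+|k|=2H,\,|j|,|k|\ge r} b_{jk}\int x^j\phi\,dx\int y^k\psi\,dx$; translation invariance on $\Sc_r$ then forces $b_{jk}=0$ unless $|j|=|k|=r$, because the moment functional $\phi\mapsto\int x^j\phi\,dx$ is translation invariant on $\Sc_r$ only for $|j|=r$, and this in turn requires $H=r$, while positive semidefiniteness of $C$ makes the resulting matrix $(a_{jk})=(b_{jk})$ nonnegative definite. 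Collecting the cases: if $H<r$ the spectral term survives and the origin term vanishes, i.e.\ $(a_{jk})=0$; if $H=r$ then $\sigma=0$ and only the $(a_{jk})$ term survives; if $H>r$ then $F=0$ (the spectral integral against a suitable $\phi\in\Sc_r$ would diverge) and $T_0=0$ (it cannot be translation invariant on $\Sc_r$), so $X\equiv 0$. This is the asserted representation.

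The step I expect to be the main obstacle is the rigorous handling of the spectral representation on the proper subspace $\Sc_r$ rather than on all of $\Sc$: one must make precise in what sense the spectral distribution $T$ exists and is unique, cleanly split off the part supported at the origin, and control the behaviour of the spectral measure $F$ near $0$, since it is exactly the interplay between the order $2r$ of vanishing of the admissible test products and the homogeneity degree $-(d+2H)$ that produces the trichotomy $H<r$, $H=r$, $H>r$. The classification of positive homogeneous measures on $\R^d\setminus\{0\}$ and of homogeneous distributions supported at a point is classical, but invoking it carefully — and checking that the finite reflection invariant $\sigma$ together with the nonnegative matrix $(a_{jk})$ exhaust all stationary selfsimilar possibilities — is where the work lies; referring to \cite{D} (or to Gelfand--Vilenkin) streamlines this part.
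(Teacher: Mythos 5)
The paper does not prove this statement: Theorem \ref{thmdobrushin} is quoted from Dobrushin \cite{D} without proof, so there is no internal argument to compare yours against. Your outline is the standard proof of Dobrushin's characterization and is sound as a sketch: direct verification of positivity, finiteness (via $\widehat\phi(u\theta)=O(u^r)$ near the origin), stationarity and scaling for sufficiency; and, for necessity, a spectral representation of the stationary field indexed by $\Sc_r$ combined with the classification of positive homogeneous measures on $\R^d\setminus\{0\}$ (giving $u^{-2H-1}\,du\,\sigma(d\theta)$ with $\sigma$ finite and, after symmetrization, reflection invariant) and of homogeneous distributions supported at the origin (giving the moment term and forcing $|j|=|k|=r$, $H=r$), which together produce the trichotomy $H<r$, $H=r$, $H>r$. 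The step carrying the real technical weight is the one you flag yourself: the Bochner--Schwartz-type representation holds a priori on all of $\Sc$, and extending it to a field defined only on the subspace $\Sc_r$ — i.e.\ the Gel'fand--Vilenkin/Yaglom/Dobrushin spectral theory of fields with stationary increments of order $r$, which controls the admissible singularity of the spectral measure at the origin and the indeterminacy supported there — is precisely what must be imported from the literature, and delegating it to \cite{D} is in effect what the paper does by citing the theorem rather than proving it.
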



\subsection*{Random polynomials}  

The special case $H=r$ in Theorem \ref{thmdobrushin} corresponds to random
polynomials.  For $x\in \R^d$ let $X_r(x)$ be the Gaussian random
polynomial of order $r$ defined by
\[
X_r(x)=\sum_{|j|\le r} \xi_j x^j,
\]
where $x^j=\prod_{k=1}^d x_k^{j_k}$ for each multi index
$j=(j_1,\dots,j_d)$, $|j|=\sum_{k=1}^dj_k$, and $(\xi_j)$ are standard
Gaussian random variables.  Then
\[
X_r(\phi)=\sum_{|j|\le r} \xi_j \int_{\R^d} x^j\phi(x)\,dx,\quad \phi\in
\Sc,
\]
defines a corresponding Gaussian random field on $\Sc$. By restricting
to $\Sc_r$ one obtains the order $r$ terms
\[
X_r(\phi)=\sum_{|j|=r} \xi_j \int_{\R^d} x^j\phi(x)\,dx,\quad \phi\in
\Sc_r.
\]
As a field on $\Sc_r$ the polynomial field $X_r$ is $r$-selfsimilar and stationary. Indeed, if
$\phi\in  \Sc_r$ then
\[
X_r(\phi(\cdot+a))=\sum_{|j|=r} \xi_j \int_{\R^d}
(x-a)^j\phi(x)\,dx=\sum_{|j|=r} \xi_j \int_{\R^d}
x^j\phi(x)\,dx.
\]


\subsection*{Nondegenerate selfsimilar Gaussian random fields}  

Considering Gaussian $H$-selfsimilar random fields on $\Sc_r$ with
$H<r$, it follows by Theorem \ref{thmdobrushin} that 
\begin{equation}\label{covdobrushin}
C(\phi,\psi)=
\int_{\S^{d-1}}\int_0^\infty \widehat \phi(u\theta)\overline{\widehat
  \psi(u\theta)} u^{-2H-1}\,du\sigma(d\theta),\quad \phi\in \Sc_r,
\end{equation}
and if we specialize to isotropic random fields then the covariance
functional takes the form
\begin{equation}\label{covisotropic}
C(\phi,\psi)=\mathrm{const} \int_{\R^d} \widehat \phi(z)\overline{\widehat
  \psi(z)}\, |z|^{-2H-d}\,dz.
\end{equation}
The most basic case is $H=-d/2$ and $r=0$ combined with a rotationally
symmetric measure $\sigma(d\theta)$. By Parseval's identity, this is
Gaussian white noise $M_d(dx)$ with $M_d(\phi)\sim N(0,\int\phi(x)^2\,dx)$
and $C(\phi,\psi)=\int_{\R^d}\phi(x)\psi(x)\,dx$ (ignoring constants).

If we return to (\ref{covdobrushin}) but restrict the range of
parameters to $-d/2<H<r$, then the covariance may be recast into
\begin{equation}\label{covdobrushinalt}
C(\phi,\psi)= \int_{\R^d\times\R^d} \phi(x)
  \psi(y) |x-y|^{2H}\,\Kc\Big(\frac{x-y}{|x-y|}\Big)\,dxdy,
\end{equation}
where $\Kc$ is an anisotropy weight function on $\S^{d-1}$ defined by 
\[
\Kc(e)=\int_{\S^{d-1}}\int_0^\infty e^{-i r\theta\cdot e}
  u^{-2H-1}\,du\sigma(d\theta),\quad e\in \S^{d-1}.
\]
Recalling from (\ref{defspaceofmeasures}) the setting of indexing
measures in $\Mc_r$ we conclude that, with the exception of
independently scattered white noise, all isotropic selfsimilar
Gaussian random fields are characterized by a covariance functional
$C(\mu,\mu')=\mathrm{Cov}(X(\mu),X(\mu'))$, such that 
\begin{equation}\label{covfunct}
C(\mu,\mu')=\mathrm{const} 
\int_{\R^d\times\R^d}  |x-y|^{2H}\,\mu(dx)\mu'(dy),\quad \mu,\mu'\in\widetilde\Mc.
\end{equation}
For $-d/2<H<0$ the relevant set $\widetilde\Mc\subset\Mc_r,$ consists
of signed measures with finite Riesz-energy. For $0<H<r$ the
moment condition $\int\mu(dy)=0$ enters and we have the
additional representation
\[
C(\mu,\mu')=\mathrm{const} 
\int_{\R^d\times\R^d}  (|x|^{2H}+|y|^{2H}-|x-y|^{2H})\,\mu(dx)\mu'(dy).
\] 
The self-similarity of the model is
equivalent to the second order self-similarity property
$C(\mu_c,\mu_c)=c^{2H} C(\mu,\mu)$. Our final remark in this section
is that because of (\ref{rieszfourier}) and (\ref{covdobrushin}), the
Riesz kernel preserves self-similarity, in the sense 
\begin{equation}\label{covriesz}
C((-\Delta)^{-m/2}\phi,(-\Delta)^{-m/2}\psi)=
\int_{\S^{d-1}}\int_0^\infty \widehat \phi(r\theta)\overline{\widehat
  \psi(r\theta)} u^{-2H-2m-1}\,du\sigma(d\theta).
\end{equation}
Thus, if $X(\phi)$ is selfsimilar with index $H$ then the random field
$Y(\phi)$ defined by $Y(\phi)=X((-\Delta)^{-m/2}\phi)$ for some $m$
with $H+m<r$, is selfsimilar with index H+m, cf.\ \cite{BEK2010} Thm 4.7.


\section{Gaussian shot noise random fields}

In this section we introduce a wide class of Gaussian selfsimilar
random fields on $\R^d$, generated by white noise and obtained by a
shot noise construction.  Isotropic as well as anisotropic models are
covered.  The white noise is defined on the extended space
$\R^d\times\R_+$ where the additional degree of freedom may be thought
of as a random radius of an euclidean ball located in $\R^d$.  A class of
nonnegative functions in $L_2(\R^d)$ adds further generality to the
model, acting as pulse functions for a shot noise mechanism driven by
the random balls.  The Riesz kernel transform furthermore provides
means of moving from one range of self-similarity indices to
another. In the end all combined we obtain efficient methods to
extract a variety of processes, bridges and membranes from these
Gaussian random fields.


\subsection*{Random ball white noise}

For fixed spatial dimension $d\ge 1$ we consider a parameter $\beta$,
such that 
\[
\beta\in (d-1,d)\cup(d,2d),
\]
put 
\begin{equation}\label{defcontrol}
\widetilde\nu_\beta(du)=u^{-\beta-1} du,\; u>0, \quad
\nu(dz)=dx\,\widetilde\nu_\beta(du),\; z=(x,u)\in \R^d\times \R_+, 
\end{equation}
and let $M_\beta(dz)$ be white noise on $\R^d\times \R_+$ defined by
the control measure $\nu(dz)$. Also, with some abuse of notation, we
write $M_d(dz)$ for Gaussian noise with control measure
$\nu(dz)=dx\,\delta_1(du)$, which in this manner is identified with
Gaussian white noise $M_d(dx)$ as introduced earlier.  It is
convenient to let each Gaussian point $(x,u)$ represent a Euclidean
ball $B(x,u)$ in $\R^d$ centered in $x$ with radius $u>0$.  The
general method of evaluating random fields that we adopt in this work
amounts to measure the aggregation of Gaussian mass from all of
$M_\beta(dz)$ as the stochastic integral
\begin{equation}\label{defrandomballs}
X(\mu)=\int_{\R^d\times \R_+} \mu(B(x,u))\,M_\beta(dz),
\end{equation}
where $\mu$ belongs to a suitable class of signed measures.  This
approach is introduced in \cite{KLNS2007} and developed further in
\cite{BEK2010} and \cite{BD2010}. 

As a preparation to help see the origin of self-similarity in these models we
begin with the simplest case of fixed size balls corresponding to
$M_d(dz)$, and consider
\[
X(\mu)=\int_{\R^d\times \R_+} \mu(B(x,u))\,M_d(dz)
=\int_{\R^d} \mu(B(x,1))\,M_d(dx).
\]
This model is Gaussian with covariance functional
\[
C(\mu,\mu')=\int_{\R^d} \mu(B(x,1)) \mu'(B(x,1)) \,dx
=\int_{\R^d\times\R^d} |B(y,1)\cap B(y',1)|\, \mu(dy)\mu'(dy').
\]
The volume $V(|y-y'|))=|B(y,1)\cap B(y',1)|$ of two intersecting balls
only depends on the distance between the center points $y$ and $y'$
and is given by
\begin{equation}\label{defvolfunction}
V(u)=2v_{d-1}\int_{u/2}^1 (1-s^2)^{\frac{d-1}{2}}\,ds,\quad u\le 2,
\end{equation}
and $V(u)=0$ for $u>2$, where $v_d=|B(0,1)|$ is the volume of the unit ball in $\R^d$, see \cite{G2003}.   The one-point
evaluations
\[
X(\delta_t)=\int_{\R^d} \ind_{\{|x-t|\le 1\}}\,M_d(dx),\quad t\in \R^d,
\]
exist and generate a point-wise defined zero mean Gaussian random
field with covariance $C(\delta_t,\delta_{t'})=V(|t-t'|)$.  This
random field does not possess the self-similarity property itself but
if we replace the control measure
$dx\,\delta_1(du)$ with $dx\,\tilde\nu_\beta(du)$ for 
$M_\beta(dz)$ in (\ref{defrandomballs}), then  the covariance is 
\begin{align*}
C(\mu,\mu')&=\int_{\R^d\times\R_+} \mu(B(x,u)) \mu'(B(x,u))\,dx u^{-\beta-1}\,du\\
&=\int_{\R^d\times\R^d} \int_0^\infty
u^dV(|y-y'|/u)\,u^{-\beta-1}du\, \mu(dy)\mu'(dy')\\
&=\mathrm{const} \int_{\R^d\times\R^d} \frac{\mu(dy)\mu'(dy')}{|y-y'|^{\beta-d}},
\end{align*}
which is selfsimilar with index $H=(d-\beta)/2$ according to
(\ref{covisotropic}), assuming $\mu$ and $\beta$ are such that the
integral exists.  As a second type of modification we replace
$\mu(B(x,1))$ in the previous expression for $X(\mu)$ with integration
of $\mu$ with respect to a spatially shifted power law function
$h_\gamma(y)=|y|^{-(d-\gamma)}$, $0<\gamma<d/2$, and consider
\[
X(\mu)=\int_{\R^d}  \int_{\R^d} h_\gamma(y-x)\,\mu(dy)\, M_d(dx).
\]
The heuristic picture of randomly sized overlapping balls in $\R^d$
now changes to one of overlapping pulse functions.  By
(\ref{rieszcomp}), the covariance is found to have the selfsimilar
shape
\[
\mathrm{Cov}(X(\mu),X(\mu'))=\mathrm{const}
\int_{\R^d\times\R^d} \frac{\mu(dy)\mu'(dy')}{|y-y'|^{d-2\gamma}}.
\]
An equivalent interpretation of this particular construction is that
we integrate the Riesz kernel with respect to white noise $M_d(dx)$:
\begin{align}\label{rieszkernelwhitenoise}
\langle M_d,(-\Delta)^{-\gamma/2}\mu(\cdot)\rangle
                    =\int_{\R^d} \int_{\R^d} |y-x|^{-(d-\gamma)}\,\mu(dy)\,M_d(dx).
\end{align}
We emphasize the distinction between the use of the Riesz kernel in
(\ref{rieszkernelwhitenoise}) as opposed to the effect of
Riesz integration by shifting from $\mu$ to
$(-\Delta)^{-m/2}\mu$ in the random balls model in \eqref{defrandomballs},  applying the composition rule (\ref{rieszcomp}), and obtaining
\begin{align*}
C((-\Delta)^{-m/2}\mu,(-\Delta)^{-m/2}\mu')
&=\mathrm{const} \int_{\R^d\times\R^d} \frac{(-\Delta)^{-m/2}\mu(dy)(-\Delta)^{-m/2}\mu'(dy')}{|y-y'|^{\beta-d}}\\
&=\mathrm{const} \int_{\R^d\times\R^d} \frac{\mu(dy)\mu'(dy')}{|y-y'|^{\beta-d-2m}}.
\end{align*}
The range of the self-similarity index in these relations will depend on
a more detailed analysis of which combinations of parameters and
admissible measures one can use, and will be part of the subsequent
results.


\subsection*{Shot noise}

We are now in position to introduce a Gaussian shot noise random
field $X_h$ driven by $M_\beta(dz)$ and with a given pulse function
$h$ in $L_2(\R^d)$.  We define the shift and scale mapping
\begin{equation}\label{tauL2}
\tau_z h(y)=h((y-x)/u),\quad z=(x,u)\in \R^d\times\R_+,
\end{equation}
and put
\[
X_h(\mu)=\int_{\R^d \times \R_+}\langle \mu,\tau_z h\rangle\,M_\beta(dz),\quad 
 \langle \mu,\tau_z h\rangle=\int_{\R^d} \tau_zh(y)\,\mu(dy). 
\]
Occasionally we use $\tau_x$ as a short hand notation for $\tau_{(x,1)}$.
The construction of the shot noise then relies on stating proper
assumptions on the class of measures $\mu$ involved and on the class
of admissible pulse functions $h$ for which $X_h$ will exist as a
Gaussian stochastic integral. The shot noise mechanism we investigate here is inspired by similar constructions in \cite{Cag11}.

Following \cite{BEK2010}, for $\beta\not=d$ we let   
\begin{align*}
\Mc^\beta=\Big\{ &\mu\in \Mc : \exists \alpha\; s.t.\;  
\alpha<\beta<d \mbox{   or  } d<\beta<\alpha \\
 &\mbox{  and  }\int_{\R^d\times\R^d}
|y-y'|^{d-\alpha}\,|\mu|(dy)|\mu|(dy')<\infty\Big\}.
\end{align*}
For $d<\beta$ this space of measures is closely related to the set of
measures with finite Riesz energy. Then we combine $\Mc^\beta$  with the
previously introduced sets $\Mc_r$, $r=0,1,\dots$, and put
\[
\Mc^\beta_r=\Mc^\beta\cap \Mc_r.\qquad 
\wt\Mc_\beta=
\begin{cases}
\Mc^\beta, &d<\beta<2d, \\
\Mc^\beta_1,& d-1<\beta<d.
\end{cases}
\]
Let $\Hc_\beta$ be the subset of functions in $L_2(\R^d)$, such that,
for the case $d<\beta<2d$,
\[
\Big|\int_{\R^d} h(x) h(x+y)\,dx\Big|\le \frac{\mathrm{const}}{|y|^{\alpha-d}},\,\qquad
\mbox{all $y\in\R^d$ and $\alpha\in (\beta,2d)$,}
\]
and, for the case $d-1<\beta<d$, 
\[
\Big|\int_{\R^d} h(x)(h(x+y)-h(x))\,dx\Big|\le \mathrm{const}|y|^{d-\alpha},\,\qquad
\mbox{all $y\in\R^d$ and $\alpha\in (d-1,\beta)$}.
\]

\begin{theorem}
\label{thmshotnoise}
Fix $\beta \in (d-1,d)\cap (d,2d)$. Let $M_\beta(dz)$ be the Gaussian
random ball white noise on $\R^d\times\R_+$ with control measure
$\nu(dz)=dx\,\widetilde\nu_\beta(du)$ as defined in
(\ref{defcontrol}).  Assume $h\in \Hc_\beta$ and 
let $H$
denote the parameter
\[
H=\frac{d-\beta}{2}\in 
\begin{cases}
(-d/2,0),& d<\beta<2d, \\
(0,1/2), & d-1<\beta<d.
\end{cases}
\]

\medskip
\noindent
i)   
The shot noise random field 
\[
\mu\mapsto X_h(\mu),\quad \mu\in\wt\Mc_\beta,
\]
is well-defined as a zero mean Gaussian $H$-selfsimilar stochastic
integral with covariance functional
\[
\mathrm{Cov}(X_h(\mu),X_h(\mu'))=\int_{\R^d\times\R^d}
\Kc_h\Big(\frac{y-y'}{|y-y'|}\Big)\,\frac{\mu(dy)\mu'(dy')}{|y-y'|^{\beta-d}},    
\]
where the kernel function $\Kc_h$ is defined on the unit sphere
$\S^{d-1}$ and given by 
\[
\Kc_h(e)=
\begin{cases}
\displaystyle{C_h\int_0^\infty u^{d-1-\beta}\int_{\R^d} h(x)h(x+e/u)\,dxdu}, & d<\beta<2d, \\[10pt]
\displaystyle{C_h\int_0^\infty u^{d-1-\beta}\int_{\R^d} h(x)(h(x+e/u)-h(x))\,dxdu}, & d-1<\beta<d,
\end{cases}
\]
$e\in \S^{d-1}$, for some constant $C_h$.  In particular, if $h$ is
rotationally symmetric on $\R^d$ then $\Kc_h(e)=\Kc_h$ is a constant
and the random field $X_h$ is isotropic.

\medskip
\noindent
ii) Consider the restricted range $d<\beta<2d$. For
the case $d\ge 2$, let $m$ be a real number such that
\[
1<2m<d,\quad 0<d-\beta+2m<2,
\]
and put $H'=H+m$. Assume $(-\Delta)^{-m/2}\mu\in\Mc^\beta$. Then the random field
\[
\mu\mapsto X_h((-\Delta)^{-m/2}\mu)=\int_{\R^d\times\R_+}
\langle (-\Delta)^{-m/2}\mu,\tau_zh\rangle \,M_\beta(dz),
\]
is $H'$-selfsimilar.  For the one-dimensional case, $d=1$,  with
$1<\beta<2$, the random field 
\[
\mu\mapsto X_h((-\Delta)^{-1/2}\mu)=\int_{\R\times\R_+}
\int_\R h((y-x)/u)\mu([y,\infty))\,dy \,M_\beta(dx,du),
\]
is $(3-\beta)/2$-selfsimilar for $\mu$ such that
$\int_y^\infty\mu(dz)\,dy\in \Mc^\beta$. 
\end{theorem}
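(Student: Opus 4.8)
The plan is to establish Theorem~\ref{thmshotnoise} by treating the three assertions in sequence, reducing each to a computation of a covariance functional together with a verification that the relevant class of indexing measures makes the stochastic integral square-integrable, and then reading off self-similarity from the explicit form of the covariance using Dobrushin's characterization (Theorem~\ref{thmdobrushin}) in the form \eqref{covdobrushin}.

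For part (i) I would first fix $\mu,\mu'\in\wt\Mc_\beta$ and compute, using the white-noise isometry for $M_\beta$,
\[
\mathrm{Cov}(X_h(\mu),X_h(\mu'))=\int_{\R^d\times\R_+}\la\mu,\tau_zh\ra\,\la\mu',\tau_zh\ra\,dx\,u^{-\beta-1}du.
\]
Expanding $\la\mu,\tau_zh\ra\la\mu',\tau_zh\ra=\int\int \tau_zh(y)\tau_zh(y')\,\mu(dy)\mu'(dy')$, applying Fubini, and carrying out the $x$-integral of $h((y-x)/u)h((y'-x)/u)$ by the substitution $x\mapsto x+y$ (which produces a factor $u^d$ and the function $\int h(x)h(x+(y-y')/u)\,dx$), one is left with an integral over $u$ that, after the scaling $u\mapsto|y-y'|u$, factors as $|y-y'|^{-(\beta-d)}$ times $\Kc_h\big((y-y')/|y-y'|\big)$; in the increment case $d-1<\beta<d$ the moment condition $\int\mu(dy)=0$ lets one replace $h(x+e/u)$ by $h(x+e/u)-h(x)$, which is what makes the $u$-integral converge near $u=\infty$. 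The bound defining $\Hc_\beta$ is exactly what guarantees absolute convergence of this $u$-integral (near $0$ in the first case, near $\infty$ in the second) so that $\Kc_h$ is finite, and the Riesz-energy-type condition defining $\wt\Mc_\beta$ makes the resulting double integral against $|\mu||\mu'|$ finite, establishing that $X_h$ is a genuine element of the Gaussian Hilbert space. Self-similarity with index $H=(d-\beta)/2$ then follows since the covariance has the homogeneous form $\int\int|y-y'|^{2H}\Kc_h(\cdots)\,\mu(dy)\mu'(dy')$, which is precisely \eqref{covdobrushinalt}; rotational symmetry of $h$ makes the $x$-integral $\int h(x)h(x+e/u)\,dx$ depend only on $1/u$, hence $\Kc_h(e)$ constant, giving isotropy.

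For part (ii) the idea is that replacing $\mu$ by $(-\Delta)^{-m/2}\mu$ inside $X_h$ is, at the level of covariances, the same operation as composing Riesz kernels. Concretely, substituting $(-\Delta)^{-m/2}\mu$ and $(-\Delta)^{-m/2}\mu'$ into the covariance formula from (i) and then using the composition rule \eqref{rieszcomp} twice to integrate out the two Riesz kernels against $|y-y'|^{-(\beta-d)}$, one obtains
\[
\mathrm{Cov}\big(X_h((-\Delta)^{-m/2}\mu),X_h((-\Delta)^{-m/2}\mu')\big)=\mathrm{const}\int_{\R^d\times\R^d}\Kc_h\Big(\tfrac{y-y'}{|y-y'|}\Big)\frac{\mu(dy)\mu'(dy')}{|y-y'|^{\beta-d-2m}},
\]
which is homogeneous of degree $2(H+m)=2H'$, so $X_h((-\Delta)^{-m/2}\cdot)$ is $H'$-selfsimilar by the same appeal to \eqref{covdobrushin}–\eqref{covdobrushinalt}. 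The constraints $1<2m<d$ and $0<d-\beta+2m<2$ are needed so that the iterated composition rule \eqref{rieszcomp} applies (each exponent stays in the admissible range and the total exponent $\beta-d-2m$ corresponds to a legitimate self-similarity index), and the hypothesis $(-\Delta)^{-m/2}\mu\in\Mc^\beta$ is exactly what makes the new field well-defined via part (i). The one-dimensional case is the same argument with $(-\Delta)^{-1/2}\mu(dx)=\int_x^\infty\mu(dz)\,dx$ in place of the Riesz integral, so that $\la(-\Delta)^{-1/2}\mu,\tau_zh\ra=\int_\R h((y-x)/u)\,\mu([y,\infty))\,dy$, yielding self-similarity index $(3-\beta)/2$.

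The main obstacle I expect is not the formal covariance algebra but the careful justification of the Fubini interchanges and the verification that the defining conditions on $\Hc_\beta$ and on $\wt\Mc_\beta$ (respectively on $(-\Delta)^{-m/2}\mu$) are simultaneously sufficient for (a) finiteness of $\Kc_h$, (b) absolute convergence of the double space integral against $|\mu||\mu'|$, and (c) the validity of applying \eqref{rieszcomp} in part (ii) when $\mu$ is a signed measure rather than a nice density; handling the near-diagonal singularity $|y-y'|^{-(\beta-d-2m)}$ together with the tail behaviour of $\mu$, and splitting the $u$-integral at $u=1$ to use the two different sides of the $\Hc_\beta$ bound, is where the real work lies. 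I would isolate these estimates as a lemma before assembling the proof.
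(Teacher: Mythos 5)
Your treatment of part (i) is essentially the paper's own argument: the white-noise isometry, Fubini and the scaling substitution producing $\Kc_h$, the two-sided control of the $u$-integral (the $\Hc_\beta$ bound at one end, boundedness via Cauchy--Schwarz at the other, which the paper packages as $g(u)\le \mathrm{const}\,\min(u^\alpha,u^d)$ for the function $g(u)=\int_{\R^d}\langle\mu,\tau_{x,u}h\rangle^2dx$), the moment condition $\int\mu(dy)=0$ to pass to the increment form when $d-1<\beta<d$, and self-similarity read off from the homogeneous covariance. No objection there.

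Part (ii), however, contains a genuine gap. You propose to insert $(-\Delta)^{-m/2}\mu$ into the covariance formula of part (i) and then apply (\ref{rieszcomp}) twice to integrate out the two Riesz kernels against $|y-y'|^{-(\beta-d)}$. This step fails for two reasons. First, the part (i) covariance carries the angular factor $\Kc_h\big((y-y')/|y-y'|\big)$, and (\ref{rieszcomp}) concerns pure radial power kernels; since part (ii) does not assume $h$ rotationally symmetric, you cannot convolve the Riesz kernels through this factor. Second, even in the isotropic case the exponents are out of range: $|y-y'|^{-(\beta-d)}$ is a Riesz kernel of order $2d-\beta$, and absorbing both order-$m$ kernels gives total order $2d-\beta+2m=d+(d-\beta+2m)>d$, whereas (\ref{rieszcomp}) requires $m_1+m_2<d$; already the first composition needs $m<\beta-d$, which the hypotheses $1<2m<d$, $0<d-\beta+2m<2$ do not guarantee, and the corresponding convolution integrals diverge at infinity. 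Relatedly, your target formula, a constant times $\int\!\int \Kc_h(\cdot)\,|y-y'|^{2H'}\mu(dy)\mu'(dy')$ with $H'>0$, is not an absolutely convergent expression for the variance under the stated hypotheses (no moment condition on $\mu$ is assumed in (ii)), so it cannot carry the scaling argument. The paper avoids all of this: it applies (\ref{rieszcomp}) exactly once, to merge the two order-$m$ kernels (legitimate because $2m<d$), keeps the representation
\[
C(\mu,\mu)=\int_{\R^d}\frac{1}{|w|^{\beta-d}}\,\Kc_h\Big(\frac{w}{|w|}\Big)\int_{\R^d\times\R^d}\frac{\mu(dy)\,\mu(dy')}{|y-y'+w|^{d-2m}}\,dw,
\]
establishes finiteness by the same $\min(u^\alpha,u^d)$ bound applied to $g_m(u)=\int_{\R^d}\langle(-\Delta)^{-m/2}\mu,\tau_{x,u}h\rangle^2dx$, and reads off $H'$-selfsimilarity directly from $C(\mu_c,\mu_c)=c^{2H'}C(\mu,\mu)$ in this representation, with the analogous computation for $d=1$. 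To repair your argument you would need to replace the ``integrate out'' step by this (or an equivalent) representation rather than the formal kernel $|y-y'|^{2H'}$.
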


\begin{proof}
i)  The Gaussian stochastic integral $X_h(\mu)$ exists if and only if
the variance 
\[
\mathrm{Cov}(X_h(\mu),X_h(\mu))
=\int_{\R^d\times\R_+} \langle \mu,\tau_{x,u}h\rangle^2\,dx\,u^{-\beta-1}\,du
\]
is finite. We need to verify that this is the case under the stated assumptions
and establish the explicit form of the covariance functional.   The
proof can be seen as an adaptation of Lemma 2.3 in
\cite{BEK2010} to the case of a shot noise weight function $h$. 

We begin with the case $d<\beta<2d$. Then $\wt\Mc_\beta=\Mc^\beta$.
We introduce the function $g$ defined by 
\[
g(u)=\int_{\R^d} \langle \mu,\tau_{x,u}h\rangle^2\,dx,\quad u>0.
\]
Using Fubini's theorem and homogeneity, we obtain
\begin{equation}\label{proofidentity}
g(u)=u^d\int_{\R^d\times\R^d}
\int_{\R^d}h(x)h(x+(y-y')/u)\,dx\,\mu(dy)\mu(dy').
\end{equation}
Since $h\in \Hc_\beta$ and $\mu\in \Mc^\beta$ we can find $\alpha\in
(\beta,2d)$, such that
\begin{equation}\label{proofbound}
0<g(u)\le \mathrm{const}\, u^\alpha \int_{\R^d\times\R^d}
\frac{|\mu|(dy)|\mu|(dy')}{|y-y'|^{\alpha-d}}<\infty.
\end{equation}
On the other hand, using H\"older's inequality and $\mu\in \Mc$, it
follows from (\ref{proofidentity}) that $g(u)\le \|h\|_2\,
\|\mu\|^2\,u^d$, so that 
\begin{equation}\label{proofbound2}
0<g(u)\le \mathrm{const}\, \min(u^\alpha,u^d)
\end{equation}
and hence
\[
\int_0^\infty g(u)\, u^{-\beta-1}du= 
\int_{\R^d\times\R_+} \langle\mu,\tau_z h\rangle^2\,\nu(dz)<\infty.
\]
Next we may replace $g$ in the left-hand side integral by the integral
expression in (\ref{proofidentity}) and apply a change of variables, to obtain 
\[
\int_{\R^d\times\R_+} \langle\mu,\tau_z h\rangle^2\,\nu(dz)
 = \int_{\R^d\times\R^d}\Kc_h\Big(\frac{y-y'}{|y-y'|}\Big)
\frac{\mu(dy)\mu(dy')}{|y-y'|^{\beta-d}}
\]
with the desired function $\Kc_h$, as stated in the theorem.  By
(\ref{covdobrushinalt}), this is the covariance functional for a
selfsimilar Gaussian model with self-similarity index
$H=-(\beta-d)/2<0$.  

For the remaining case $d-1<\beta<d$ in statement
i), we have $\mu\in \Mc_1$ and hence $\int_{\R^d}\mu(dx)=0$. Thus, we
may replace (\ref{proofidentity}) by
\[
g(u)=u^d\int_{\R^d\times\R^d}
\int_{\R^d}h(x)(h(x+(y-y')/u)-h(x))\,dx\,\mu(dy)\mu(dy').
\]
Then we use the relevant property of $h\in \Hc_\beta$ for this range
of the parameter $\beta$ to obtain an $\alpha\in (d-1,\beta)$, such
that the bounds in (\ref{proofbound}) and (\ref{proofbound2}) are
preserved.  In parallel with the previous case it remains to integrate
over $u$ to obtain the covariance functional, which now yields a
self-similarity index $H$ in the range $0<H<1/2$.

To prove part ii) of the theorem we begin with the case $d\ge 2$, take
$\beta$ and $m$ as specified, and consider the function
\[
g_m(u)=\int_{\R^d} \langle (-\Delta)^{-m/2}\mu,\tau_{x,u}h\rangle^2\,dx,\quad u>0,
\]
for $h\in \Hc_\beta$ and $(-\Delta)^{-m/2}\mu\in \Mc^\beta$.
Using the notation
\[
V_h(y)=\int_{\R^d} h(x)h(x+y)\,dx, \quad y\in \R^d,
\]
we have
\[
g_m(u)=u^d \int_{\R^d\times\R^d} V_h((y-y')/u)\,
(-\Delta)^{-m/2}\mu(dy)(-\Delta)^{-m/2}\mu(dy').  
\]
By using $h\in \Hc_\beta$ and H\"older's inequality as in the proof of
part 1), we find that $g_m$ satisfies relation (\ref{proofbound2}) for
some $\alpha$ with $\beta<\alpha<2d$, which implies that the
covariance functional
\[
C(\mu,\mu)=\int_0^\infty g_m(u)\, u^{-\beta-1}du= 
\int_{\R^d\times\R_+} \langle (-\Delta)^{-m/2}\mu,\tau_z h\rangle^2\,\nu(dz)
\]
is finite.  Moreover, by a change of variable and relation (\ref{rieszcomp}),
\[
g_m(u)=u^d \int_{\R^d} V_h(w/u)\, \int_{\R^d\times\R^d} 
\frac{\mu(dy) \mu(dy')}{|y-y'+w|^{d-2m}}\,dw.
\]
Thus,
\[
C(\mu,\mu)=
\int_{\R^d} \frac{1}{|w|^{\beta-d}} \Kc_h\Big(\frac{w}{|w|}\Big)
\frac{\mu(dy) \mu(dy')}{|y-y'+w|^{d-2m}}\,dw,
\]
where
\[
\Kc_h(e)=\mathrm{const}\int_0^\infty u^{d-\beta-1}V_h(e/u)\,du, \quad e\in\S^{d-1},
\]
is a finite function on the unit sphere. Clearly,
\[
C(\mu_c,\mu_c)=c^{2H'}\,C(\mu,\mu),\quad H'=\frac{d-\beta+2m}{2}. 
\]
For $d=1$, the arguments are parallel and lead to the representation
\[
C(\mu,\mu)=\int_{\R\times\R} 
\int_0^\infty u^{-\beta}V_h\Big(\frac{y-y'}{|y-y'|}\frac{1}{u}\Big)\,du\,
\frac{\mu([y,\infty)) \mu([y',\infty))}{|y-y'|^{\beta-1}}\,dydy',
\]
which scales with self-similarity index of order $(3-\beta)/2\in (1/2,1)$.
\end{proof}

\begin{theorem}
\label{thmsingularshot}
Let $M_d(dx)$ be Gaussian white noise on $\R^d$ with control
measure $dx$. For $\beta\in (d-1,d)\cup (d,2d)$, put
$H=(d-\beta)/2$ and let $h_\beta$ and $h_\beta^+$ be functions defined
by
\[
h_\beta(x)=|x|^{-\beta/2},\quad x\in \R^d,\qquad 
h_\beta^+(x)=x_+^{-\beta/2},\quad x\in \R, \quad x_+=x\vee 0. 
\]

\medskip\noindent
i) The Gaussian random field
\[
\mu\mapsto X(\mu)=\int_{\R^d} \langle \mu, \tau_x
h_\beta\rangle \,M_d(dx), \quad \mu\in \wt\Mc_\beta, 
\]
is  $H$-selfsimilar with covariance 
\[
\mathrm{Cov}(X(\mu),X(\mu'))=\mathrm{const}  \int_{\R^d\times\R^d}
 |y-y'|^{2H} \mu(dy)\mu'(dy').
\]
For $d-1<\beta<d$ this may be written 
\[
\mathrm{Cov}(X(\mu),X(\mu'))=C_+ \int_{\R^d\times\R^d}
 \Big(|y|^{2H}+|y'|^{2H} -|y-y'|^{2H}\Big)\mu(dy)\mu'(dy')
\]
with a positive constant $C_+$.

\medskip\noindent ii) Restricting to $d\ge 2$ and $d<\beta<2d$, let
$m$ be a real number such that
\[
0<2m<d,\quad 0<d-\beta+2m<2.
\]
For $\mu$ such that $(-\Delta)^{-m/2}\mu\in \Mc^\beta$ we have  
\begin{align*}
\mu\mapsto 
&\int_{\R^d} \langle
(-\Delta)^{-m/2}\mu,\tau_xh_\beta\rangle\, M_d(dx)\\  
&=\mathrm{const}\int_{\R^d}\int_{\R^d}|y-x|^{H'-d/2} \mu(dy)\, M_d(dx),
\end{align*} 
and this map defines a selfsimilar Gaussian random field with
self-similarity index $H'=(d-\beta)/2+m\in (0,1)$.   For the case $d=1$ and
$1<\beta<2$, the random field 
\begin{align*}
\mu\mapsto 
\int_{\R} \langle
(-\Delta)^{-1/2}\mu,\tau_x h^+_\beta\rangle\, M_1(dx)
=\mathrm{const}\int_{\R}\int_{\R}(y-x)_+^{H'-1/2} \mu(dy)\, M_1(dx),
\end{align*}
is $H'$-selfsimilar with $H'=(3-\beta)/2\in (1/2,1)$.  Also, 
\begin{equation}\label{thmBM}
\mu\mapsto 
\int_{\R} (-\Delta)^{-1/2}\mu(x)\, M_1(dx)
=\int_{\R}  \int_x^\infty \mu(dy)\, M_1(dx)
\end{equation}
is $1/2$-selfsimilar. 

\end{theorem}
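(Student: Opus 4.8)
The statement to prove is Theorem \ref{thmsingularshot} in full: parts i) and ii), including the endpoint \eqref{thmBM}. The guiding principle is the same as in Theorem \ref{thmshotnoise}, but the construction is lighter because the singular pulse $h_\beta(x)=|x|^{-\beta/2}$ is itself scale invariant, $h_\beta(x/u)=u^{\beta/2}h_\beta(x)$, so self-similarity is built into the pulse and no integration over a radius variable is needed. The existence of the Gaussian stochastic integral $X(\mu)$ reduces to finiteness of $\mathrm{Var}(X(\mu))=\int_{\R^d}\langle\mu,\tau_x h_\beta\rangle^2\,dx$, and by Fubini this equals the double integral against $\mu\otimes\mu$ of the self-convolution kernel $\int_{\R^d}h_\beta(y-x)h_\beta(y'-x)\,dx$. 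Writing $h_\beta(x)=|x|^{-(d-m_0)}$ with $m_0=d-\beta/2$, this inner integral is exactly the left-hand side of the Riesz composition rule \eqref{rieszcomp}, and the whole proof is organized around evaluating it.

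For part i) with $d<\beta<2d$ (so $H=(d-\beta)/2<0$) one checks $0<m_0<d$ and $m_0+m_0=2d-\beta<d$, so \eqref{rieszcomp} applies directly and gives $\int_{\R^d}h_\beta(y-x)h_\beta(y'-x)\,dx=\mathrm{const}\,|y-y'|^{2H}$ (the exponent is $-(d-2m_0)=d-\beta=2H$). Hence the variance equals $\mathrm{const}\int\int|y-y'|^{2H}\mu(dy)\mu(dy')$, finite because $\mu\in\wt\Mc_\beta=\Mc^\beta$ has finite Riesz energy (choose $\alpha\in(\beta,2d)$ from the definition of $\Mc^\beta$, dominate by $|\mu|\otimes|\mu|$ and split near and far from the diagonal as in Lemma~2.3 of \cite{BEK2010}). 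Polarization gives the stated covariance, which is the isotropic Dobrushin form \eqref{covfunct}, \eqref{covisotropic}, so $X$ is $H$-selfsimilar (equivalently, the second order self-similarity $C(\mu_c,\mu_c)=c^{2H}C(\mu,\mu)$ follows from the change of variables $y=cz$). For $d-1<\beta<d$ (so $0<H<1/2$) now $m_0+m_0=2d-\beta>d$ and the inner integral diverges at infinity, so \eqref{rieszcomp} is unavailable. Here $\mu\in\wt\Mc_\beta=\Mc^\beta_1\subset\Mc_1$, so $\int\mu(dy)=0$, and this is used to regularize: subtracting $h_\beta(-x)$ from each factor (legitimate under $\mu\otimes\mu$ since $\int\mu=0$) replaces the inner integral by the convergent
\[
\int_{\R^d}\big(h_\beta(y-x)-h_\beta(-x)\big)\big(h_\beta(y'-x)-h_\beta(-x)\big)\,dx,
\]
which equals $C_+\big(|y|^{2H}+|y'|^{2H}-|y-y'|^{2H}\big)$ by the analytic continuation of \eqref{rieszcomp} (the decay at infinity is ensured by $\beta>d-2$). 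This is the second stated form of the covariance; using $\int\mu=0$ once more to drop the $|y|^{2H}$ and $|y'|^{2H}$ terms recovers the first form with the same constant, and $C_+>0$ follows since $\mathrm{Var}(X(\mu))\ge0$. Self-similarity with $H\in(0,1/2)$ again follows from the Dobrushin form.

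For part ii) with $d\ge2$, set $\tilde\mu=(-\Delta)^{-m/2}\mu$; the hypothesis $\tilde\mu\in\Mc^\beta$ already forces $\int\mu(dy)=0$, since otherwise the density $\tilde\mu(dx)\sim\mathrm{const}\,|x|^{-(d-m)}\mu(\R^d)\,dx$ would fail to have finite total variation and hence $\tilde\mu\notin\Mc$. First I would establish the representation identity: by the definition of $(-\Delta)^{-m/2}$ and one application of \eqref{rieszcomp} with orders $m_0=d-\beta/2$ and $m$ (valid since $0<m<d$ and $m_0+m=d-\beta/2+m<d$ because $2m<d<\beta$),
\[
\langle(-\Delta)^{-m/2}\mu,\tau_x h_\beta\rangle=\mathrm{const}\int_{\R^d}|y-x|^{H'-d/2}\,\mu(dy),\qquad H'-d/2=m-\beta/2,
\]
which is exactly the claimed second form, so that $X(\tilde\mu)=\mathrm{const}\int_{\R^d}\int_{\R^d}|y-x|^{H'-d/2}\mu(dy)\,M_d(dx)$. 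For the covariance I would start from the part~i) identity $\mathrm{Cov}(X(\tilde\mu),X(\tilde\mu'))=\mathrm{const}\int\int|w-w'|^{2H}\tilde\mu(dw)\tilde\mu'(dw')$ and collapse the two Riesz potentials by \eqref{rieszcomp}, which is precisely the computation displayed in the random balls subsection: it lowers the exponent by $2m$ and yields $\mathrm{const}\int\int|y-y'|^{2H'}\mu(dy)\mu'(dy')$ with $H'=(d-\beta)/2+m$. The window $0<d-\beta+2m<2$ is exactly $0<H'<1$, and the Dobrushin form gives $H'$-selfsimilarity.

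For part ii) with $d=1$ the composition becomes a one-sided Fubini computation. Using $(-\Delta)^{-1/2}\mu(dz)=\mu([z,\infty))\,dz$ and $h_\beta^+(z)=z_+^{-\beta/2}$, interchanging the order of integration in $\int_\R(z-x)_+^{-\beta/2}\mu([z,\infty))\,dz$ and integrating the inner power kernel over $z\in[x,y]$ gives $\mathrm{const}\int_\R(y-x)_+^{1-\beta/2}\mu(dy)$, where $1-\beta/2=H'-1/2$ with $H'=(3-\beta)/2\in(1/2,1)$; this is the stated one-sided moving-average representation, whose covariance is computed exactly as in part~i) (one-sided regularized composition) and scales with index $H'$. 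The endpoint \eqref{thmBM} is the boundary $\beta\uparrow2$: there is no singular pulse, $\mathrm{Var}\big(\int_\R\mu([x,\infty))\,M_1(dx)\big)=\int_\R\mu([x,\infty))^2\,dx$, and the change of variables $\mu_c([x,\infty))=\mu([x/c,\infty))$ gives variance scaling $c^{1}=c^{2\cdot(1/2)}$, so the field is $1/2$-selfsimilar. The main obstacle throughout is the regime $H'>0$ — part~i) for $d-1<\beta<d$ and all of part~ii) — where the naive Riesz composition diverges and must be replaced by its regularized value: the technical heart is justifying the regularization against $\mu\otimes\mu$ via $\int\mu=0$, identifying the kernel $|y|^{2H}+|y'|^{2H}-|y-y'|^{2H}$ with the correct sign of $C_+$, and verifying finiteness of all energy integrals under the membership conditions $\Mc^\beta$ and $\Mc^\beta_1$, in the manner of Lemma~2.3 of \cite{BEK2010}.
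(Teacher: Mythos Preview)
Your proposal is correct and follows essentially the same route as the paper: compute the variance as a double integral against $\mu\otimes\mu$ of the self-convolution $\int h_\beta(y-x)h_\beta(y'-x)\,dx$, evaluate it via the Riesz composition rule \eqref{rieszcomp} (directly when $d<\beta<2d$, after a regularization using $\int\mu=0$ when $d-1<\beta<d$), and in part~ii) collapse the extra Riesz potential into the pulse by one more application of \eqref{rieszcomp}. The only noteworthy difference is cosmetic: for $d-1<\beta<d$ the paper subtracts $h_\beta(x)$ from a single factor and obtains the constant as $c_\beta=\int h_\beta(x)(h_\beta(x+e)-h_\beta(x))\,dx$, whereas you subtract from both factors; your two-sided regularization has the minor advantage of producing the second covariance form $C_+(|y|^{2H}+|y'|^{2H}-|y-y'|^{2H})$ directly rather than via a further use of $\int\mu=0$.
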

\begin{proof}
To prove $i)$ we need to establish that
\[ 
\mathrm{Cov}(X(\mu),X(\mu))=\int_{\R^d} \langle \mu, h_\beta\rangle^2\,dx
\]
has the required properties.  Indeed, there is a constant $c_\beta$
such that
\[
\int_{\R^d} \langle \mu, h_\beta\rangle^2\,dx
=c_\beta \int_{\R^d\times\R^d}
|y-y'|^{d-\beta}\,\mu(dy)\mu(dy').
\]
Here,
\[
c_\beta=\int_{\R^d} h_\beta(x)h_\beta(x+e)\,dx <\infty,
\]
some $e\in \S^{d-1}$, for the case $d<\beta<2d$, and, using $\int\mu(dy)=0$, 
\[
c_\beta= 
\int_{\R^d} h_\beta(x)(h_\beta(x+e)-h_\beta(x))\,dx<\infty
\]
for the case $d-1<\beta<d$. To prove ii) for $d\ge 2$, 
$d<\beta<2d$, and $m$ as specified, we have by (\ref{rieszcomp}),
\begin{align*}
&\int_{\R^d} \langle
(-\Delta)^{-m/2}\mu,\tau_xh_\beta\rangle\, M_d(dx)\\  
&\qquad \qquad =\mathrm{const}\int_{\R^d}\int_{\R^d\times\R^d}
\frac{\mu(dy)}{|y-w|^{d-m}}\,\frac{dw}{|w-x|^{\beta/2}}\, M_d(dx)\\
&\qquad \qquad =\mathrm{const}\int_{\R^d}\int_{\R^d}
\frac{\mu(dy)}{|y-x|^{\beta/2-m}}\, M_d(dx)\\
&\qquad \qquad =\mathrm{const}\int_{\R^d}\int_{\R^d}|y-x|^{H'-d/2} \mu(dy)\, M_d(dx),
\end{align*} 
and we can check as before that the covariance is finite under the
given assumptions. The proof for the one-dimensional case, which uses
$m=1$, is analogous. 
\end{proof}


\subsection*{Random cylinder Gaussian fields}

The purpose of this subsection is to show that Brownian sheets models
are naturally included in the general framework of selfsimilar random
fields, and that they emerge from expanding the white noise
construction in Theorem \ref{thmshotnoise} based on random balls to
one based on random cylinders.  In the interest of not burdening our main
result Theorem \ref{thmshotnoise} with additional notation and
variations we have chosen to present these results in a separate
subsection and in a less formal manner.

We define random cylinder white noise on the product space $\R^d\times
\R_+^p$, $1\le p\le d$, equipped with a control measure that allows us
to think of the noise as Gaussian fluctuations of overlapping random
cylinders. The special case $p=1$ is the random balls white noise.   
For a given spatial integer dimension $d$ we consider an arbitrary
partition $d^\pi=(d_1,\dots,d_p)$ of $d$, $d=\sum_{i=1}^p d_i$.  Any
point $x\in \R^d=\prod_{i=1}^p \R^{d_i}$ has the representation
$x^\pi=(x^1,\dots,x^p)$, where $x^i\in\R^{d_i}$ for $1\le i\le p$.
Given a set of parameters $\widetilde\beta=(\beta_1,\dots,\beta_p)$
such that either $d_i < \beta_i < 2d_i$, $1\le i\le p$, or $d_i-1 <
\beta_i < d_i$, $1\le i\le p$, we 
define a measure $\widetilde\nu_\beta(du)$ on $\R^p_+=[0,\infty)^p$ by
\begin{equation}\label{cylinderrandomradius}
\widetilde\nu_\beta(du)=\prod_{i=1}^p u_i^{-\beta_i-1}\,du_i.
\end{equation}
The scaling relation 
\[
\widetilde\nu_\beta(c\, du)=c^{-\beta}\,\widetilde\nu_\beta(du),\quad
c>0, \qquad \beta=\sum_{i=1}^p\beta_i,
\]
holds.  Let $M_\beta(dz)$ be a Gaussian measure on $\R^d\times
\R_+^p$ defined by the intensity measure $\nu(dz)=dx\,\widetilde
\nu_\beta(du)$.  With each point $z=(x,u)$ we associate a shift and
scale operator $\tau_z h:\R^d\mapsto \R^{d+p}$ acting on functions
$h\in L_2(\R^d)$, by
\[
\tau_zh(y)=h((y^1-x^1)/u_1,\dots,(y^p-x^p)/u_p)\,. 
\]
In particular, letting $h$ be the indicator function of the partition
unit ball $C(0,1)=\{y^\pi\in\R^d: |y^i|_{d_i}\le 1, 1\le i\le p\}$,
where $|\cdot|_k$ is the euclidean norm in $\R^k$, it follows that
$\tau_zh$ with $z=(x,u)$ is the indicator function of the random
cylinder $C(x,u)$ with center point $x\in \R^d$ and partition radius
$u$, that is
\[
C(x,u)=\{y^\pi\in \R^d: |y^i-x^i|_{d_i}\le u_i, 1\le i\le p\}, 
\quad x^\pi\in \R^d, u\in R_+^p.
\] 
The map $\tau_z$ has the invariance property
\[
\tau_z h(cy)=\tau_{z/c} h(y),\quad y\in \R^d,\,
z\in\R^{d+p},\quad c>0.
\] 
In analogy to the shot noise model we define the cylinder random
field by
\[
X_h(\mu)=\int_{\R^d\times\R^p_+} \langle\mu,\tau_zh\rangle\,M_\beta(dz).
\]
By proper modifications of the arguments given in the previous
sections one can show that the generalized random field $\mu\mapsto
X(\mu)$ is well-defined for a suitably restricted class of measures.
For simplicity we focus on the simplest case $h(y)=\ind_{\{|y|\le
  1\}}$ in the rest of this section, and hence consider 
\[
X(\mu)=\int_{\R^d\times\R^p_+} \mu(C(x,u))\,M_\beta(dz).
\]
The covariance functional is
\begin{align*}
C(\mu,\mu')&=\int_{\R^d\times\R^d}\mu(dy)\mu'(dy') \int_{\R^p_+}|C(y,u)\cap
C(y',u)|\,\prod_{i=1}^p u_i^{-\beta_i-1}\,du_i\\
&=\mathrm{const}\int_{\R^d\times\R^d}\mu(dy)\mu'(dy') \prod_{i=1}^p
|y^i-{y'}^i|_{d_i}^{d_i-\beta_i}.
\end{align*}
Put 
\[
H=\sum_{i=1}^p(d_i-\beta_i)=d-\beta\in (-d/2,0)\cap (0,1/2).
\]
Then $C(\mu_c,\mu'_c)=c^{2H}C(\mu,\mu')$ and it follows that the
cylinder random field is selfsimilar with index $H$. To recognize this
model as an instance of Theorem \ref{thmdobrushin}, let $\Kc$ be the
function on $\S^{d-1}$ defined such that if $e\in\S^{d-1}$ has 
decomposition $e^\pi=(e^1,\dots,e^p)$, then  
\[
\Kc(e)=\prod_{i=1}^p |e^i|_{d_i}^{d_i-\beta_i},\quad
e=e^\pi\in \S^{d-1}.
\]
Then 
\[
C(\mu,\mu')
=\mathrm{const}\int_{\R^d\times\R^d}\mu(dy)\mu'(dy') 
\Kc\Big(\frac{y-y'}{|y-y'|_d}\Big)\, |y-y'|_d^{d-\beta}.
\]    


\section{Extracting Gaussian processes from the random fields}

The main tool for extracting random processes indexed by points on
the real line or points in Euclidean space, from abstract random
fields $X(\mu)$ indexed by measures $\mu$, will be to evaluate the
random fields using specifically chosen families of measures, such as 
$\mu_t=\delta_t-\delta_0$, $0,t\in \R^d$, $d\ge 1$.


\subsection*{Fractional Brownian motion}

Fractional Brownian motion on $\R^d$ is a para\-metrized class of
pointwise defined, centered Gaussian random fields $B_H(t)$, $t\in R^d$,
defined by the covariance functional 
\[
{\rm
  Cov}(B_H(s),B_H(t))=\frac{1}{2}(|s|^{2H}+|t|^{2H}-|t-s|^{2H}), \quad
s,t\in \R^d,
\]
where the parameter $H$, called the Hurst index, ranges over $0<H<1$
and is the self-similarity index in the sense of
$\{B_H(ct)\}\stackrel{d}{=} \{c^H B_H(t)\}$, $c>0$. The case $H=1/2$ is known
as L\'evy Brownian motion. See \cite{CohenIstas} and \cite{SamTaqqu} for
the general theory of such processes.

Next we show how to obtain $B_H$ from the selfsimilar Gaussian random
fields constructed in Theorem \ref{thmshotnoise}.  In part i) of the
theorem we take $\beta$ such that $d-1<\beta<d$ and a rotationally
symmetric function $h$ on $\R^d$ such that $h\in \Hc_\beta$. Then
$\mu=\delta_t-\delta_0\in \wt\Mc_\beta$, and the map 
\[
t\mapsto X_h(\delta_t-\delta_0)=\int_{\R^d\times\R_+}
(h((t-x)/u)-h(-x/u))\,M_\beta(dx,du),  
\]
defines a zero mean Gaussian random field with
covariance function
\[
C(s,t)=\rm{Cov}(X_h(\delta_s-\delta_0),X_h(\delta_t-\delta_0))
\]
given by 
\begin{align*}
C(s,t)&=\mathrm{const}
\int_{\R^d\times\R^d}|y-y'|^{d-\beta}(\delta_s-\delta_0)(dy)(\delta_t-\delta_0)(dy')\\
&=\mathrm{c_h} \,(|t|^{2H}+|s|^{2H}-|t-s|^{2H}), 
\end{align*}
which is a multiple of fractional Brownian motion with Hurst index $H\in
(0,1/2)$.  In particular, with $h(y)=\ind_{\{|y|\le 1\}}$ we have
\[
X_h(\delta_t-\delta_0)=\int_{\R^d\times \R_+}
(\delta_t(B(x,u))-\delta_0(B(x,u)))\,M_\beta(dx,du).
\]
This representation of $B_H(t)$ for the case $0<H<1/2$ is
discussed in \cite{BEK2010} and may be recognized as a so called $(2,H)$-Takenaka field $B_H(t)=M_\beta(V_t)$, where
\begin{align*}
V_t&=\{\hbox{all spheres separating $0$ and $t$}\}\\
&=\{(x,r):|x|\le r\} \setDiff \{(x,r):|x-t|\le r\},
\end{align*}
where $\setDiff$ denotes the symmetric difference of two sets in $\R^d$, see \cite{SamTaqqu}.  Next, in part ii) of Theorem \ref{thmshotnoise}
we consider $d\ge 2$ and take $\beta$ and $m$ such that $d<\beta<2d$,
$0<d-\beta+2m<2$ and $0<2m<d$, and pick $h\in \Hc_\beta$ again
rotationally symmetric. To show that the measure
\[
(-\Delta)^{-m/2}(\delta_t-\delta_0)(dy)=C_{m,d}
\Big(\frac{1}{|t-y|^{d-m}}-\frac{1}{|y|^{d-m}}\Big)\,dy
\]
belongs to $\Mc_\beta$, we observe
\begin{align*}
(-\Delta)^{-m/2}&(\delta_t-\delta_0) \ast
(-\Delta)^{-m/2}(\delta_t-\delta_0)(dy)\\
&=C_{2m,d}\Big(\frac{2}{|y|^{d-2m}}-\frac{1}{|t+y|^{d-2m}}
-\frac{1}{|t-y|^{d-2m}}\Big)\,dy
\end{align*}
and 
\[
\int_{\R^d} \frac{1}{|y|^{\beta-d}}\Big|\frac{2}{|y|^{d-2m}}-\frac{1}{|t+y|^{d-2m}}
-\frac{1}{|t-y|^{d-2m}}\Big|\,dy<\infty.
\]
Thus, under the stated assumptions,
\begin{equation}\label{fbmshotnoise}
t\mapsto \int_{\R^d\times\R_+}
\langle(-\Delta)^{-m/2}(\delta_t-\delta_0)),\tau_zh\rangle
\,M_\beta(dz), \quad t\in \R^d,
\end{equation}
is a multiple of fractional Brownian motion with Hurst index
$H'=H+m\in (0,1)$.  As an explicit example, with $h(y)=\ind_{\{|y|\le 1\}}$ we
can find a constant $C$, such that
\[
B_{H'}(t)\stackrel{d}{=} C
\int_{\R^d\times \R_+} 
\int_{B(x,u)}\Big(\frac{1}{|t-y|^{d-m}}-\frac{1}{|y|^{d-m}}\Big)\,dy\,M(dx,du),
\]
where $M(dx,du)$ is Gaussian white noise on $\R^d\times\R_+$ with
control measure $\nu(dx,du)=dx\,u^{2H'-d-1-2m}du$.  The special choice
of parameters $d-\beta+2m=1$ with $1<2m<d$, for which $H'=1/2$, shows
that L\'evy Brownian motion is covered by this construction. In
particular, letting $M(dx,du)$ have control measure
$\nu(dx,du)=dx\, u^{-d-2m}du$, 
\[
B_{1/2}(t)\stackrel{d}{=} C
\int_{\R^d\times \R_+} 
\int_{B(x,u)}\Big(\frac{1}{|t-y|^{d-m}}-\frac{1}{|y|^{d-m}}\Big)\,dy\,M(dx,du).
\]
Our corresponding result for dimension $d=1$ is less general in the
sense that $1<\beta<2$, $m=1$ and $H'=(3-\beta)/2\in (1/2,1)$. Random balls
representations for the one-dimensional model with this range of Hurst
index have been studied earlier, see e.g. \cite{KLNS2007},
\cite{KT2008}.  Now
\begin{equation}\label{rieszonedim}
(-\Delta)^{-1/2}(\delta_t-\delta_0)=\int_x^\infty(\delta_t-\delta_0)(dy)=1_{[0,t]}(x).
\end{equation}
Hence, letting $M(dx,du)$ be a Gaussian measure on $\R\times\R_+$ with
control measure $\nu(dx,du)=dx\,u^{2H'-4}\,du$,  
\[
B_{H'}(t)\stackrel{d}{=} C 
\int_{\R\times\R_+} \int_0^t h((y-x)/u)\,dy\,M(dx,du).
\]

We conclude this subsection by comparing the representations of
fractional Brownian motion obtained above with those we get by taking
$\mu=\delta_t-\delta_0$ in Theorem \ref{thmsingularshot}. For
$d-1<\beta<d$ this choice of $\mu$ in  Theorem \ref{thmsingularshot} i), 
generates the map
\begin{align*}
t\mapsto \int_{\R^d} \langle \delta_t-\delta_0, h_\beta\rangle\, M_d(dx) 
&=\int_{\R^d} (h_\beta(t-x)-h_\beta(-x))M_d(dx)\\
&=\int_{\R^d} (|t-x|^{H-d/2}-|x|^{H-d/2}))M_d(dx)
\end{align*}
for $H\in (0,1/2)$, which we recognize as the so called well-balanced
representation of fractional Brownian motion.  By replacing $h_\beta$
with $h_\beta^+$ for the case $d=1$, we obtain the classical
Mandelbrot and van Ness representation
\begin{equation}\label{mandelbrotness}
B_H(t)\stackrel{d}{=}\int_{\R^d}
((t-x)_+^{H-1/2}-(-x)_+^{H-1/2})\,M(dx),\quad t\ge 0.
\end{equation}
for $0<H<1/2$.  Similarly, Theorem \ref{thmsingularshot} ii) with
$\mu=\delta_t-\delta_0$ also yields a pointwise well-defined random
process on $\R^d$ given by
\begin{align*}
t\mapsto
&\int_{\R^d}\Big(\frac{1}{|t-y|^{d-m}}-\frac{1}{|y|^{d-m}}\Big)
\frac{1}{|y-x|^{\beta/2}}\,dy\,M_d(dx)\\
&=\mathrm{const}\int_{\R^d}\Big(|t-y|^{H'-d/2}-|y|^{H'-d/2}\Big)\,M_d(dx),
\end{align*}
which again is the well-balanced representation of fractional Brownian
motion with Hurst index $H'\in (0,1)$. Finally, the case $d=1$ in
Theorem \ref{thmsingularshot} applies the one-sided pulse function
$h(x)=x_+^{-\beta/2}$ on the real line, and hence extends the
Mandelbrot and van Ness representation (\ref{mandelbrotness}) to the
entire range of Hurst index $0<H<1$.  In particular, by (\ref{thmBM})
and (\ref{rieszonedim}),  
\begin{equation}\label{BM}
W_t=\int_\R (-\Delta)^{-1/2}(\delta_t-\delta_0)(x)\,M_1(dx),\quad t\ge 0,
\end{equation}
is Brownian motion.



\subsection*{Examples of the cylinder model} 

a) $\nu_1=d$: This is the heavy-tailed, one-parameter random balls
model with $r=1$ and $d<\beta<2d$, for which
\[
C(\mu,\eta)=V_d\int_{\R^d \times \R^d}\mu(dy)\eta(dy') 
|y-y'|_d^{d-\beta}.
\]
b) The case $r=d$, $\nu_1=\dots =\nu_d=1$ gives a non-symmetric random
sheets model with $d$ parameters, $1 < \beta_i < 2$, such that
\[
C(\mu,\eta)=\int_{\R^d \times \R^d}\mu(dy)\eta(dy') 
\prod_{i=1}^d C_i|y_i-y'_i|_1^{1-\beta_i}.
\]
Take product measures $\mu(dy)=\prod_{i=1}^d \mu_i(dy_i)$ and 
$\eta(dy)=\prod_{i=1}^d \eta_i(dy_i)$ to obtain
\[
C(\mu,\eta)=\prod_{i=1}^d C_i\int_{\R \times \R}\mu_i(dy_i)\eta_i(dy'_i) 
|y_i-y'_i|_1^{1-\beta_i}.
\]
In particular, $\mu_i(A)=\int_0^{t_i} \Ind_A(y)\,dy$ and
$\eta_i(A)=\int_0^{s_i} \Ind_A(y)\,dy$, yields
\[
C(\mu,\eta)=\prod_{i=1}^d C_i\int_0^{t_i}\int_0^{s_i}\frac{dy_idy'_i}
{|y_i-y'_i|_1^{\beta_i-1}} =\prod_{i=1}^d C'_i
\Big(|t_i|_1^{3-\beta_i}+|s_i|_1^{3-\beta_i}-|t_i-s_i|_1^{3-\beta_i}\Big).
\]


\subsection*{The Gaussian free field} 

The choice of parameters $H=-d/2+1$ and $r=1$ for the isotropic case
in Theorem \ref{thmdobrushin} is sometimes referred to as the Gaussian
free field.  By (\ref{covfunct}), the case $d=1$, $H=1/2$ has
covariance functional 
\[
C(\mu,\mu')=\mathrm{const} 
\int_{\R\times\R}  |x-y|\,\mu(dx)\mu'(dy),\quad \mu,\mu'\in\widetilde\Mc.
\]
With $\mu=\delta_t-\delta_0$ and $\mu'=\delta_s-\delta_0$ this gives
Brownian motion:
\[
\mathrm{const} 
\int_{\R\times\R}
|x-y|\,(\delta_t-\delta_0)(dx)(\delta_s-\delta_0)(dy)
=\mathrm{const}\, \min(s,t).
\]
For $d\ge 3$ we have $-d/2<H<0$, which is a case covered by Theorem
\ref{thmshotnoise} i) with $\beta=2(d-1)$ and $h\in \Hc_\beta$
rotationally symmetric.  The control measure of the driving Gaussian
random balls white noise is $\nu(dx,du)=dx\, u^{-2d+1}du$.

The remaining case $d=2$, $H=0$ is not included in Theorem \ref{thmdobrushin}.
However, white noise $M_2(dx)$ for $d=2$ has $H=-1$ and hence it is
natural to consider for the free field
\[
X(\phi)=M_2((-\Delta)^{-1/2}\phi),\quad \phi\in \Sc_1,
\]
with covariance functional
\begin{align*}
C(\phi,\psi)&=\int_{\R^2} (-\Delta)^{-1/2}\phi(x)
(-\Delta)^{-1/2}\psi(x)\,dx\\
&=\int_{\R^2} \phi(x)
(-\Delta)^{-1}\psi(x)\,dx,
\end{align*}
where 
\[
(-\Delta)^{-1}\psi(x)=\int_{\R^2} \psi(y)G(x-y)\,dy
\]
and $G(x)$ is the Green's function of Brownian motion in $d=2$. Thus,
\[
C(\phi,\psi)=-\int_{\R^2 \times \R^2} \phi(x)\psi(y)\,\log (|x-y|)\,dxdy,
\]
which is known as de Wijs random field, see \cite{besagmondal2005} for
a background.

\subsection*{Generalized Gaussian bridges}

For a continuous Gaussian process $X=(X_t)_{t \in [0,T]}$ and a
signed finite Borel measure $a$ on an interval $[0,T]$ of the real
line, denote by $X^{(a)}=(X^{(a)}_t)_{t \in [0,T]}$ the process $X$
conditioned on the event that $a(X) = \int_0^T X_t \,a(dt) = 0$.  Such
generalized Gaussian bridges are studied in \cite{Ali02},
\cite{Gor14}, and \cite{Sot14}.  It is shown that the conditioned
process $X^{(a)}$ admits a representation of the form
\[ 
X^{(a)}_t = X_t - f_{(a)}(t) a(X), \qquad 0 \leq t \leq T, 
\]
for a suitable continuous function $f_{(a)}: [0,T] \rightarrow \R$.
For example, Brownian motion $W=(W_t)_{t \in [0,T]}$ conditioned on
$W_1 = 0$ is obtained from $a = \delta_1$ and yields the Brownian
bridge $B=(B_t)_{t \in [0,T]}$ with representation $B_t = W_t - t
W_1$, $0 \leq t \leq 1$.  

To obtain the Brownian bridge $B$ in our setting of extracting
Gaussian processes from random fields, we apply relation (\ref{thmBM})
of Theorem \ref{thmsingularshot} with the special choice 
\equ[E:M1]{
  \mu = \delta_t - \delta_0 - t(\delta_1 - \delta_0) \in \Mc_2. 
}  
and use the linearity of the mapping $\mu \mapsto M((-\Delta)^{-1/2}\mu)$
to see
\begin{equation}\label{brownianbridge}
B_t= M_1((-\Delta)^{-1/2}(\delta_t - (1-t)\delta_0 - t\delta_1)),
\quad 0\le t\le 1.
\end{equation}
This observation generalizes as follows: consider $X_t =
M((-\Delta)^{-1/2} \mu_t)$, for an unspecified family of measures
$\mu_t\in \Mc_1$, $0 \leq t \leq T$, and assume that $(X_t)_{t \in
  [0,T]}$ is continuous on $[0,T]$ almost surely. Let $a$ be a signed
finite Borel measure on $[0,T]$ and for suitable continuous $f^{(a)}:
[0,T] \rightarrow \R$, define
\[
\mu_t^{(a)}(A) = \mu_t(A) - f^{(a)}(t) \int_0^T \mu_s(A) \,a(ds),
\] 
for Borel sets $A \subset [0,1]$.  Then the conditioned process
$X^{(a)}$ has the representation
\[
  X_t^{(a)} = X_t - f^{(a)}(t) a(X)= M_1((-\Delta)^{-1/2} \mu_t^{(a)}).
\]
Indeed,
\begin{align*}
 X_t^{(a)} &= \int_\R \int_x^\infty \mu_t(dy) M_1(dx) - f^{(a)}(t) \int_0^T \int_\R \int_x^\infty \mu_s(dy)\, M_1(dx)\, a(ds) \\
    &= \int_\R \int_x^\infty \left[ \mu_t - f^{(a)}(t) \int_0^T \mu_s
   a(ds) \right](dy)\, M_1(dx). 
\end{align*} 
As a concrete example we consider the Brownian bridge $B$ with
representation (\ref{brownianbridge}) conditioned
to have vanishing Lebesgue measure on $[0,1]$, i.e., $a(dt) = dt$. In
\cite{Gor14}, the resulting conditioned process $B^{(a)}$ is called the zero
area Brownian bridge, and is shown to satisfy
\[ 
B^{(a)}_t = B_t - 6t(1-t) a(B). 
\] 
Here, we recover this relation as
$B^{(a)}(t) = M_1((-\Delta)^{-1/2}\mu^{(a)}_t)$
with
\begin{align*}
  \mu_t^{(a)}(A) &= \mu_t(A) - 6t(1-t) \int_0^T \mu_s(A) ds \\
    &= \delta_t(A) - \delta_0(A) (1-4t+3t^2) + \delta_1(A) (2t - 3t^2) + |A| 6(t^2-t)
\end{align*}
for Borel sets $A \subset [0,1]$.  One can check that $\mu_t^{(a)}\in \Mc_3$.


\subsection*{Volterra processes}

The representation of the Brownian bridge in (\ref{brownianbridge})
involves the measure densities 
\[
(-\Delta)^{-1/2}(\delta_t-(1-t)\delta_0-t\delta_1)(x)=
(1-t)\ind_{[0,t]}(x)-t\ind_{(t,1]}(x),\quad t \in [0,1],
\]
supported on $[0,1]$.  However, the Brownian bridge also admits a
representation as a Volterra process of the form
\[ 
B_t = \int_0^t \frac{1-t}{1-x}\, M_1(dx). 
\]
The question arises if we are able to define measures $(\mu_t)_{t \in
  [0,1]}$ on $\R$ such that the support of $(-\Delta)^{-1/2}\mu_t$ is a subset of 
$[0,t]$ and $B_t = M_1((-\Delta)^{-1/2}\mu_t)$.

Let $X=(X_t)_{t \in [0, \infty)}$ be a Gaussian Volterra process with
\[ 
X_t = \int_0^t K(t,x) \,M_1(dx) 
\]
and assume that the kernel $K$ is defined on $\R \times \R$ with
$K(t,x) = K(t,x)\, \Ind(0 < x \leq t)$. Moreover, assume that $K(t,
\cdot)$ is continuous from the left and has limits from the right and has
finite total variation. Then the measures $(\mu_t)_{t \in \R}$ defined
by $\mu_t((-\infty, x)) = - K(t,x)$ are admissible in the sense
$\mu_t\in \Mc_1$,  and $(-\Delta)^{-1/2}\mu_t$ is supported on $[0,t]$. If
$K(t, x)$ is differentiable for $0 < x < t$ then
\[ 
\mu_t(dx) = K(t,t) \,\delta_t(dx) - K(t,0+) \,\delta_0(dx) -
\frac{\partial}{\partial x}K(t,x) \, \Ind_{(0,t]}(x)\, dx. 
\]
By defining $\mu_t$ in this manner it follows that 
$X_t = M_1((-\Delta)^{-1/2}\mu_t)$. In fact, 
\begin{align*}
  M_1((-\Delta)^{-1/2}\mu_t) 
    &= \int_0^\infty \int_x^\infty \mu_t(dy) \,M_1(dx) 
= \int_0^\infty \mu_t([x, \infty)) \,M_1(dx)
\end{align*}
and so, since $\mu_t(\R) = 0$,
\begin{align*}
  M_1((-\Delta)^{-1/2}\mu_t) &= \int_0^\infty - \mu_t((-\infty, x)) M_1(dx) \\
    &= \int_0^\infty K(t,x) \,M_1(dx) = \int_0^t K(t,x) \,M_1(dx).
\end{align*}

We give some examples. Of course, the simplest example is Brownian
motion, where $K(t,x) = \Ind_{(0,t]}(x)$ and $\mu_t(dx) =
\delta_t(dx) - \delta_0(dx)$.  For $\alpha \geq 0$ and $0 \leq t < 1$
define
\[ 
X^{(\alpha)}_t = \int_0^t \left( \frac{1-t}{1-x} \right)^\alpha\,M_1(dx). 
\]
For $\alpha = 0$ we get the Brownian motion and for $\alpha = 1$ we
get the usual Brownian bridge. The integration kernel
$K(t,x)$ is such that, for $0<x<t$,
\[
K(t,x) = (1-t)^\alpha (1-x)^{-\alpha}\, \Ind_{(0,t]}(x),\quad 
\frac{\partial}{\partial x} K(t,x) = \alpha
(1-t)^\alpha (1-x)^{-\alpha-1}.
\]
Hence the measures $(\mu_t)_{t \in [0,1]}$ become
\[ 
\mu_t(dx) = \delta_t(dx) - (1-t)^\alpha \delta_0(dx) - \alpha
\frac{(1-t)^\alpha}{(1-x)^{1+\alpha}}\, \Ind_{(0,t]}(x)\, dx. 
\]
A further example is the centered Ornstein-Uhlenbeck process with
stability parameter $\alpha > 0$ and diffusion parameter $\sigma > 0$,
given by
\[ 
X^{(\alpha, \sigma)}_t = \int_0^t \sigma e^{\alpha(x-t)} \, dW_x.
\]
The kernel is $K(t,x) = \sigma e^{\alpha(x-t)}\, \Ind_{(0,t]}(x)$ and
thus $\frac{\partial}{\partial x} K(t,x) =
\alpha \sigma e^{\alpha (x-t)}$, $0 < x < t$. Therefore
\[ 
\mu_t(dx) = \sigma \delta_t(dx) - \sigma e^{-\alpha t} \delta_0(dx) -
\alpha \sigma e^{\alpha (x-t)} \, \Ind_{(0,t]}(x)\,dx. 
\]

\subsection*{Fractional bridges}

For $0<H<1$, let $B_H$ be a standard fractional Brownian motion on the
real line and let $a_t$ be the function on the unit interval defined by 
\[
a^H_t=\frac{1+t^{2H}-(1-t)^{2H}}{2},\quad 0\le t\le 1.
\]
It is known that the fractional Brownian bridge process obtained by
pinning  $B_H$ to zero at time $t=1$ is equal in distribution to 
$Y_t=B_H(t)-a^H_t B_H(1)$,  see~\cite{Gas04}, \cite{Gor14}.

To obtain the fractional Brownian bridge in this work we apply Theorem
\ref{thmsingularshot} i) for $d=1$ and $0<\beta<1$ to get 
\[
Y_t\stackrel{d}{=} \mathrm{const} X(\delta_t-\delta_0-a^H_t(\delta_1-\delta_0)),\quad 0<H<1/2.
\]
Similarly, by Theorem \ref{thmsingularshot} ii) for $d=1$,
$1<\beta<2$, and $1/2<H<1$,
\[
Y_t\stackrel{d}{=} \mathrm{const} \int_\R 
\Big((t-x)_+^{H-1/2} -(1-a_t^H)(-x)_+^{H-1/2}-a_t^H(1-x)_+^{H-1/2}\Big)\,M_1(dx).
\]


\subsection*{Membranes by soft boundary thinning}

In this subsection we discuss briefly an approach of extending
the method for extracting Gaussian bridge processes in one dimension to
a method for extracting Gaussian membranes in higher dimensions. 

We start again with the representation $B_t = M_1((-\Delta)^{-1/2}
\mu_t)$ in (\ref{brownianbridge}) of the Brownian bridge on $[0,1]$.
Here $\mu_t = \delta_t - \omega_t$, where the measure $\omega_t =
(1-t)\delta_0 - t \delta_1$ is the harmonic measure on the set
$\{0,1\}$ (the start and end point of the bridge) of a Brownian motion
starting in $t$. This observation leads us to defining a class of
Gaussian membranes as follows. Let $D \subset \R^d$ be a bounded
domain satisfying the Poincar\'e cone condition (see Definition~3.10
in~\cite{Mor10}). For $t \in D$, let $\omega_t$ denote the harmonic measure
\[ 
\omega_t(A) = \Prob(W(\tau) \in A), \qquad A \in \Bc(\R^d), 
\]
where $\{W(x),x\in\R^d\}$ is $d$-dimensional Brownian motion with $W(0)=t$ and
$\tau$ is the stopping time $\tau = \inf\{ s \geq 0: W(s) \in \partial D \}$, 
and let $\mu_t$ in this subsection denote the signed measure 
\[
\mu_t = \delta_t - \omega_t,\quad t\in D.
\]

Given a continuous function $f: \partial D \rightarrow \R$, a solution to the Dirichlet problem with boundary value $f$ is a function $u: \bar D \rightarrow \R$ which is harmonic in $D$ and satisfies $u(t) = f(t)$ for $t \in \partial D$. It can be shown that the unique solution is
\equ[E:DIRI]{ u(t) = \int_{\R^d} f(y) \omega_t(dy), \qquad t \in \bar D,}
see \cite{Mor10}, Corollary~3.40. 

We have $\int_{\R^d} \mu_t(dy)=0$ and, since the function $u(t)=t$ is harmonic on
$\R^d$,  
\[
\int_{\R^d} y\,\mu_t(dy)=t-\int_{\partial D} y\,\omega_t(dy)=0.
\]
Thus, $\mu_t\in\Mc_2$.  Considering now the Gaussian selfsimilar
random fields $\mu\mapsto X(\mu)$ in Theorem \ref{thmsingularshot} i),
or Theorem \ref{thmshotnoise} i), with self-similarity index $H\in
(0,1/2)$, we can introduce for $d\ge 1$ a collection $(X_t)_{t \in
  \bar D}$ of zero mean Gaussian random variables defined by $t\mapsto
X_t=X(\mu_t)$, with finite covariance
\begin{equation}\label{covsoft}
\E X_s X_t = \mathrm{const}
\int_{\R^d \times \R^d} |y - y'|^{2H} \mu_s(dy)\mu_t(dy'),\quad s,t\in
D. 
\end{equation}
To construct a Brownian membrane on $D$ vanishing on $\partial D$, we 
apply Theorem \ref{thmsingularshot} ii) with $H=1/2$ and put
\[
X_t=\int_{\R^d}\int_{\R^d}
\frac{\mu_t(dy)}{|y-x|^{(d-1)/2}}\,M_d(dx),\quad t\in \R^d.
\]
The variance now is
\begin{equation}\label{covsoftbrownian}
\E X_t^2=\int_{\R^d} \Big(\frac{1}{|t-x|^{(d-1)/2}}
-\int_{\partial D} \frac{\omega_t(dy)}{|y-x|^{(d-1)/2}}\Big)^2\,dx,
\end{equation}
assuming the integral exists.  Here, we restrict to $H=1/2$ but the
same construction works for $H<1$.

The following Proposition justifies the term membrane for this
class of processes in the domain $D$ which vanishes on the boundary
$\partial D$. 

\begin{proposition} The processes $(X_t)_{t \in \bar D}$ described above for $0<H\le
  1/2$ are well-defined, and 
  for $x \in \partial D$ we have $\lim_{t \rightarrow x} \E X_t^2 = 0$.
\end{proposition}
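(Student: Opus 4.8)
The plan is to prove the two assertions separately: first that the covariance functional in \eqref{covsoft}, respectively the variance in \eqref{covsoftbrownian}, is finite, so that the Gaussian variables $X_t=X(\mu_t)$ are genuinely well-defined; and then that $\E X_t^2\to 0$ as $t\to x\in\partial D$.

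For well-definedness, I would first record that $\mu_t=\delta_t-\omega_t\in\Mc_2$, which was already verified above, and that $\omega_t$ is a probability measure supported on $\partial D$. For $0<H<1/2$ this places $\mu_t$ in $\wt\Mc_\beta$ for the appropriate $\beta\in(d-1,d)$, so Theorem~\ref{thmsingularshot} i) (or Theorem~\ref{thmshotnoise} i)) directly gives that $X(\mu_t)$ exists with covariance \eqref{covsoft}; the only thing to check is that the Riesz-energy integral $\int\int |y-y'|^{2H}\mu_t(dy)\mu_t(dy')$ is finite, which follows because $\mu_t$ has compact support (contained in $\bar D$) and $2H>0$, so $|y-y'|^{2H}$ is bounded on $\bar D\times\bar D$. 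For $H=1/2$, the representation is $X_t=\int_{\R^d}\langle\mu_t,\tau_x h_\beta\rangle\,M_d(dx)$ with $h_\beta(x)=|x|^{-\beta/2}$ and $\beta=d-1$, and finiteness of \eqref{covsoftbrownian} follows from the same energy computation (via the constant $C_+$ appearing in Theorem~\ref{thmsingularshot} i)), using that $2H=1$ and $\bar D$ is bounded; I would just note that the Poincar\'e cone condition guarantees $\omega_t$ is well-defined and that $\tau<\infty$ a.s.

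For the boundary limit, the key observation is that $X_t=X(\mu_t)$ depends continuously (in $L^2$) on $t$ through the covariance \eqref{covsoft}, and that $\mu_x=\delta_x-\omega_x$ for $x\in\partial D$ satisfies $\omega_x=\delta_x$ — a Brownian motion started on the boundary is immediately absorbed, so $\mu_x=0$. Hence $\E X_t^2=\mathrm{const}\iint |y-y'|^{2H}\mu_t(dy)\mu_t(dy')\to \mathrm{const}\iint|y-y'|^{2H}\mu_x(dy)\mu_x(dy')=0$, provided we justify passing to the limit. The main obstacle is precisely this continuity of $t\mapsto\omega_t$ as $t\to x\in\partial D$: one needs $\omega_t\to\delta_x$ weakly, which is exactly the statement that the Dirichlet solution \eqref{E:DIRI} is continuous up to the boundary, i.e. that every boundary point is regular. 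This is where the Poincar\'e cone condition enters, and I would cite it (Morters--Peres, Corollary~3.40, already referenced as \cite{Mor10}) rather than reprove it. Once $\omega_t\to\delta_x$ weakly is in hand, I would expand $\iint|y-y'|^{2H}\mu_t(dy)\mu_t(dy') = |t|^{2H}\cdot 0$-type bookkeeping: writing it out, the double integral equals $\int|t-y'|^{2H}\omega_t(dy') + \int|y-t|^{2H}\omega_t(dy) - \iint|y-y'|^{2H}\omega_t(dy)\omega_t(dy')$ minus a $|t-t|^{2H}=0$ term, and since $\omega_t$ concentrates near $x$ and $|t-x|\to 0$, each of the three surviving terms tends to $0$ by the continuity and boundedness of $(y,y')\mapsto |y-y'|^{2H}$ on the compact set $\bar D\times\bar D$ together with weak convergence. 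For the $H=1/2$ case handled via \eqref{covsoftbrownian} I would argue the same way, using that $\int_{\R^d}\big(|t-x'|^{-(d-1)/2}-\int_{\partial D}|y-x'|^{-(d-1)/2}\omega_t(dy)\big)^2\,dx'$ is the Riesz energy of $\mu_t$ up to a constant and hence again governed by $\mu_t\to 0$; dominated convergence is available because the integrand is dominated uniformly for $t$ in a neighbourhood of $x$, the bound coming from the finite-energy estimate that gave well-definedness.

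In summary: (1) well-definedness from compact support of $\mu_t$ plus $2H>0$ and Theorems~\ref{thmsingularshot}/\ref{thmshotnoise}; (2) weak convergence $\omega_t\to\delta_x$ as $t\to x\in\partial D$, invoking regularity of boundary points under the Poincar\'e cone condition; (3) conclude $\E X_t^2\to 0$ by continuity of the covariance kernel on $\bar D\times\bar D$. The essential difficulty is step (2), the boundary regularity, which we outsource to \cite{Mor10}; everything else is routine given the finite-energy bounds already established in the proofs of Theorems~\ref{thmshotnoise} and \ref{thmsingularshot}.
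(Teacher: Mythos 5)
Your treatment of the case $0<H<1/2$ follows the paper's proof essentially verbatim: the one real ingredient is the weak convergence $\omega_t\to\delta_x$ as $t\to x\in\partial D$, obtained from continuity up to the boundary of the Dirichlet solution \eqref{E:DIRI} under the Poincar\'e cone condition, after which $\E X_t^2\to 0$ follows by testing the kernel $|y-y'|^{2H}$, continuous and bounded on $\bar D\times\bar D$, against $\omega_t\otimes\omega_t$ and $\delta_t\otimes\omega_t$. (Your sign bookkeeping in the expansion of $\int\int|y-y'|^{2H}\mu_t(dy)\mu_t(dy')$ is off --- the cross terms carry a minus sign and the $\omega_t\otimes\omega_t$ term a plus --- but this is harmless since each surviving term tends to $|x-x|^{2H}=0$ separately.) Likewise, well-definedness for $0<H<1/2$ from compact support of $\mu_t$, and the outsourcing of boundary regularity to \cite{Mor10}, match the paper.

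The gap is in the case $H=1/2$. You justify both well-definedness and the boundary limit by asserting that \eqref{covsoftbrownian} ``is the Riesz energy of $\mu_t$ up to a constant,'' invoking the constant $C_+$ of Theorem~\ref{thmsingularshot}~i); but that statement is restricted to $d-1<\beta<d$, i.e.\ $0<H<1/2$ strictly, and at $\beta=d-1$ the identification is not automatic: with $\delta=(d-1)/2$ the composition integrals $\int_{\R^d}|z|^{-\delta}|z+w|^{-\delta}\,dz$ diverge at infinity because $2\delta=d-1<d$, so one cannot expand the square in \eqref{covsoftbrownian} term by term and apply the composition rule \eqref{rieszcomp}. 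Both the finiteness of \eqref{covsoftbrownian} and its convergence to $0$ rest on the cancellation coming from $\mu_t(\R^d)=0$: the four terms must be kept together, e.g.\ by writing $\E X_t^2=\int_{\partial D\times\partial D}F_t(y,y')\,\omega_t(dy)\omega_t(dy')$ with $F_t(y,y')=\int_{\R^d}\bigl(|z|^{-\delta}-|z+y-t|^{-\delta}-|z+t-y'|^{-\delta}+|z+y-y'|^{-\delta}\bigr)|z|^{-\delta}\,dz$, checking that this $z$-integral converges and that $F_t$ is bounded on $\partial D\times\partial D$, and then concluding $\E X_t^2\to F_x(x,x)=0$ by the same weak convergence. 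This is exactly the step the paper carries out, and your appeal to ``dominated convergence \dots\ the bound coming from the finite-energy estimate that gave well-definedness'' is circular here, since for $H=1/2$ that finite-energy estimate is itself the thing requiring proof.
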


\begin{proof}
First, we show that, for $x \in \partial D$, we have $\lim_{t
  \rightarrow x} \omega_t = \delta_x$ in the weak sense: Assume that
$f: \R^d \rightarrow \R$ is continuous and bounded. By~\eqref{E:DIRI},
the function 
\[ 
u(t) = \int_{\R^d} f(y)\, \omega_t(dy),
\]
is continuous on $\bar D$ with $u(t) = f(t)$ on $\partial D$, and thus
\[ 
\lim_{t \rightarrow x} \int_{\R^d} f(y)\, \omega_t(dy) = \lim_{t \rightarrow x} u(t) = u(x) = f(x) = \int_{\R^d} f(y)\, \delta_x(dy). 
\]

 Next, for $0<H<1/2$ by (\ref{covsoft}), 
\[
    \E X_t^2 
=\mathrm{const}\Big( \int_{\R^d \times \R^d} |y - y'|^{2H} \omega_t(dy)\omega_t(dy')
- 2 \int_{\R^d} |t - y|^{2H} \omega_t(dy)\Big).
\]
From the first part of the proof it follows that $\omega_t \otimes
\omega_t \rightarrow \delta_x \otimes \delta_x$ as $t \rightarrow x$
and thus
\begin{align*}
    &\lim_{t \rightarrow x} \int_{\R^d \times \R^d} |y - y'|^{2H}
  \omega_t(dy) \omega_t(dy')\\
&\qquad \qquad = \lim_{t \rightarrow x} \int_{\R^d \times \R^d} |y -  y'|^{2H} 
\delta_x(dy) \delta_x(dy') = 0.
\end{align*}
Moreover, we have $\delta_t \otimes \omega_t \rightarrow \delta_x
\otimes \delta_x$ as $t \rightarrow x$. Hence,
  \begin{align*}
    \lim_{t \rightarrow x} \int_{\R^d} |t - y|^{2H} \omega_t(dy) &
= \lim_{t \rightarrow x} \int_{\R^d \times \R^d} |y - y'|^{2H} \omega_t(dy) \delta_t(dy') \\
      &= \int_{\R^d \times \R^d} |y - y'|^{2H} \delta_x(dy)
    \delta_x(dy') = 0.
  \end{align*}
Turning to the case $H=1/2$, by rewriting (\ref{covsoftbrownian}),
\[
\E X_t^2=\int_{\partial D\times\partial D} F_t(y,y')\,\omega_t(dy)\omega_t(dy'),
\]
where, using the short notation $\delta=(d-1)/2$,
\[
F_t(y,y')=\int_{\R^d}
\Big(\frac{1}{|z|^\delta}-\frac{1}{|z+y-t|^\delta}
   -\frac{1}{|z+t-y'|^\delta}+\frac{1}{|z+y-y'|^\delta}\Big)
\frac{dz}{|z|^\delta}.
\]
For any $t\in \R^d$, the function $F_t$ is bounded on $\partial
D\times\partial D$. Hence, 
\[
\E X^2_t \to F_x(x,x)=0, \quad t\to x\in\partial D. 
\qedhere
\]
\end{proof}



\section{Gaussian membranes by hard boundary thinning}

In this section we consider another way to construct membranes on
domains in $\R^d$. Again, let $D \subset \R^d$ be a bounded domain,
and let $\beta < d$ be a real number. We modify the
basic model~\eqref{defrandomballs} and consider 
\[
X(\mu)=\int_{\R^d\times \R_+} \mu(B(x,u)) M^D_\beta(dx, du),
\]
where $M^D$ is the Gaussian random measure on $\R^d \times \R_+$ with
control measure \linebreak $\nu^D(dx,du) = u^{-\beta-1} \Ind(B(x,u) \subset
D)\,dxdu$. Hence in the random balls interpretation, the intensity
measure is modified such that balls which do not fall entirely inside
the domain $D$ are discarded.  This model is well-defined for any
$\mu\in \Mc$ and $\beta<d$.  Here, we will apply the extraction
principle $t\mapsto X(\delta_t)$ and consider 
\equ[E:2]{ 
W_\beta(t) =
  \int_{\R^d\times \R_+} \delta_t(B(x,u)) M^D_\beta(dx, du), \quad
  t\in \R^d.
} 
Since $D$ is bounded there is an $N > 0$ such that $\Id(B(x,u)
\subset D) = 0$ for all $u > N$ and $x \in D$. Hence, 
$W_\beta$ is a Gaussian random field with finite covariance function
\begin{align} \label{covdomain}
\E W_\beta(s) W_\beta(t) &= \int_{\R^d\times \R_+} \Id(s,t \in B(x,u)
\subset D) u^{-\beta-1} \, dx du\\
&\le \int_0^N |B(s,u)\cap D|   u^{-\beta-1}\, du<\infty,\nonumber
\end{align}
for all $s,t \in \R^d$.
In particular, $W_\beta(s)$ and $W_\beta(t)$ are
independent if and only if there is no $x \in \R^d$ and $u>0$
such that $B(x,u)$ is a subset of $D$ and covers both points $s \in
\R^d$ and $t \in \R^d$.  By the Lebesgue dominated convergence
theorem, $\E(W_\beta(t)^2)\to 0$ as $t\to t_0\in \partial  D$, which
again justifies the notion of a membrane. 

The random fields $W_\beta$ are not selfsimilar in the sense that
$W(cs)\stackrel{d}{=} c^H W(s)$ for some $H$. But we will see that
they are selfsimilar in the following local sense (see \cite{Fal02}
for more details and background): A zero mean random field
$W=(W(t))_{t \in E}$, $E \subset \R^d$ open, is said to be
locally asymptotically selfsimilar with index $H$ in the point
$z \in E$, if $H$ is the supremum of all $\gamma \geq 0$ such that
\equ[E:30]{ \eps^{-\gamma} ( W(z + \eps s) - W(z) ) \longrightarrow 0
} as $\eps \rightarrow 0$ in the sense of finite dimensional
distributions. Then the random field $T^z=(T^z(s))_{s \in \R^d}$ with
\equ[E:31]{ T^z(s) = \lim_{\eps \rightarrow 0} \tau(\eps) ( W(z + \eps
  s) - W(z) ) } is called the tangent field at $z \in \R^d$, if
$\tau$ is a suitable scaling function such that the limit exists in
the sense of finite dimensional distributions and $T^z \not\equiv
0$. The tangent field is selfsimilar with index $H$ and uniquely
determined modulo constants. By ~(\ref{E:30}), $\tau(\eps) \eps^\gamma
\rightarrow \infty$ as $\eps \rightarrow 0$ for all $\gamma < H$ and
$\tau(\eps) \eps^\gamma \rightarrow 0$ as $\eps \rightarrow 0$ for all
$\gamma > H$.  It is not necessarily the case, however, that
$\tau(\eps) \sim c \eps^{-H}$, some $c > 0$.

\begin{theorem}\label{T:LASS}
  The Gaussian membrane $W_\beta$ is in every point $z \in D$
  locally asymptotically selfsimilar with index 
\[
H = 
\begin{cases}
(d-\beta)/2,&   d-1<\beta<d, \\
1/2,& \beta \leq d-1.
\end{cases}
\]
Moreover, the tangent field $T^z$ in $z$ is a multiple of fractional
Brownian motion with Hurst index $H$.  The scaling function $\tau$ is
given by 
\equ[E:33]{ \tau(\eps) = \begin{cases} \eps^{(\beta-d)/2},
    &\text{$d-1 < \beta < d$,} \\ ( - \eps \ln(\eps)
    )^{-1/2}, &\text{$\beta = d-1$,} \\ \eps^{-1/2},
    &\text{$\beta < d-1$.} \end{cases} }
\end{theorem}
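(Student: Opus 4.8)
Since $W_\beta$ is a centered Gaussian field, the finite--dimensional distributions of the rescaled increment fields are determined by their covariances, so it suffices to prove that for all $s,s'\in\R^d$
\[
\tau(\eps)^2\,\E\bigl[(W_\beta(z+\eps s)-W_\beta(z+\eps s'))^2\bigr]\longrightarrow c_z\,|s-s'|^{2H}\qquad(\eps\to 0)
\]
for some constant $c_z>0$ not depending on $s,s'$. Indeed, taking $s'=0$ gives $\E T^z(s)^2=c_z|s|^{2H}$, and polarization then yields $\E T^z(s)T^z(s')=\tfrac{c_z}{2}\bigl(|s|^{2H}+|s'|^{2H}-|s-s'|^{2H}\bigr)$, the covariance of a constant multiple of fractional Brownian motion with index $H$; in particular $T^z\not\equiv 0$, and reading off powers of $\eps$ identifies the local index in the sense of (\ref{E:30}) as $H$. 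By the identity behind (\ref{covdomain}) the increment variance above equals $\Phi_D(z+\eps s,z+\eps s')$, where for $a,b\in\R^d$
\[
\Phi_D(a,b)=\int_{\R^d\times\R_+}\Ind\bigl(B(x,u)\ \text{separates}\ a,b;\ B(x,u)\subset D\bigr)\,u^{-\beta-1}\,dx\,du ,
\]
and dropping the constraint $B(x,u)\subset D$ gives the Takenaka quantity $\Phi_{\R^d}(a,b)=c_\beta|a-b|^{d-\beta}$ when $d-1<\beta<d$ (the isotropic $(2,H)$-Takenaka variance, $H=(d-\beta)/2$), while $\Phi_{\R^d}\equiv\infty$ for $\beta\le d-1$.

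\textbf{The range $d-1<\beta<d$.} Write $r=\eps|s-s'|$ and $\delta_0=\mathrm{dist}(z,\partial D)>0$. A ball separating $z+\eps s$ and $z+\eps s'$ contains at least one of them, so its centre lies within $u+r$ of $z$; hence $B(x,u)\subset B(z,2u+r)\subset D$ whenever $2u+r\le\delta_0$, and the membership constraint only discards separating balls of radius $u\gtrsim\delta_0/2$. Using the crescent--volume estimate $2u^d(v_d-V(r/u))=\kappa_d\,r\,u^{d-1}(1+O(r/u))$ with $\kappa_d=2v_{d-1}$, the total contribution of those balls to $\Phi_{\R^d}$ is bounded by $\mathrm{const}\cdot r\int_{\delta_0/3}^\infty u^{d-\beta-2}\,du=O(r)=O(\eps)$, the integral converging precisely because $d-\beta-2<-1$ in this range. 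Therefore $\Phi_D(z+\eps s,z+\eps s')=c_\beta\,\eps^{d-\beta}|s-s'|^{d-\beta}+O(\eps)$; multiplying by $\tau(\eps)^2=\eps^{\beta-d}=\eps^{-2H}$ and using $1-2H=1-(d-\beta)>0$, the error is $O(\eps^{1-2H})\to 0$ and the limit is $c_\beta|s-s'|^{2H}$, a multiple of fBm with $H=(d-\beta)/2$.

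\textbf{The range $\beta\le d-1$.} Now it is the boundedness of $D$ --- no admissible ball has radius exceeding $N$ --- that renders $\Phi_D$ finite, and it is this truncation, not a shrinking--ball effect, that fixes the order of the increment variance. Split the radial integral at $u=r$. For $u<r$ the separating balls fill at most two disjoint balls of radius $u$, contributing $\le\mathrm{const}\int_0^r u^{d-\beta-1}\,du=O(r^{d-\beta})$, which is $o$ of the leading term identified below (note $d-\beta\ge1$). For $u\ge r$ the set of admissible centres is a thin shell about $\partial B(z,u)$ of radial width $\approx r\,|\omega\cdot\theta|$ in direction $\omega$, where $\theta=(s-s')/|s-s'|$, of volume $\kappa_d\,r\,u^{d-1}(1+O(r/u))$; for $u$ up to order $\delta_0$ this shell lies entirely in $D$, while for $\delta_0/2\lesssim u\le N$ the membership constraint reduces it but contributes only $O(r)$ (a bounded $u$--range against a density of order $r\,u^{d-\beta-2}$). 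Hence $\Phi_D(z+\eps s,z+\eps s')=\kappa_d\,r\int_r^{c\delta_0}u^{d-\beta-2}\,du+O(r)$ for a suitable $c$, and the two subcases follow from the elementary asymptotics of $\int_r^{c\delta_0}u^{d-\beta-2}\,du$ as $r\to0$: it converges when $\beta<d-1$, giving $\Phi_D\sim c_z\,r$ (with $c_z$ depending on $z$ and $D$) matched by $\tau(\eps)=\eps^{-1/2}$; it equals $\ln(c\delta_0/r)\sim\ln(1/\eps)$ when $\beta=d-1$, giving $\Phi_D\sim\kappa_d|s-s'|\,\eps\ln(1/\eps)=\kappa_d|s-s'|(-\eps\ln\eps)$ matched by $\tau(\eps)=(-\eps\ln\eps)^{-1/2}$. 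In either subcase $\tau(\eps)^2\Phi_D(z+\eps s,z+\eps s')\to(\mathrm{const})\,|s-s'|$, the L\'evy Brownian motion covariance, i.e.\ fBm with $H=1/2$.

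\textbf{Main obstacle.} The delicate part is the regime $\beta\le d-1$, where the boundary of $D$ genuinely enters the scaling limit. For $\beta=d-1$ the logarithmic factor must be pinned down exactly, uniformly in $z$ and in the direction $s-s'$, and one must verify that the direction--dependent contribution of the macroscopic balls (radii of order $N$, which feel $\partial D$) is of lower order --- here $O(\eps)$ against the leading $\eps\ln(1/\eps)$ --- so that the limit is the isotropic $\kappa_d|s-s'|$. For $\beta<d-1$ the analogous macroscopic contribution is of the \emph{same} order $\eps$ as the leading term, and the real work is to show that, after integrating over all ball radii, it still yields a rotationally invariant constant $c_z$; establishing that the tangent covariance has the precise form $\mathrm{const}\,(|s|^{2H}+|s'|^{2H}-|s-s'|^{2H})$ --- rather than that of some merely self-similar Gaussian field of order $1/2$ --- is the crux of identifying the tangent field with fractional Brownian motion.
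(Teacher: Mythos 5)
Your overall strategy---reduce to increment variances, use Gaussianity and polarization, and compare the constrained integral with the unconstrained Takenaka quantity---is sound and runs parallel to the paper's proof, which organizes the same computation through Lemma~\ref{lemma} (the truncated, unconstrained quantity $\widetilde\Psi(M,t)$ of \eqref{E:PsiTilde}) and a polarization identity for $(\delta_{z+\eps s}-\delta_z)(\delta_{z+\eps t}-\delta_z)$. For $d-1<\beta<d$ and for $\beta=d-1$ your estimates are correct and complete in substance: the balls affected by the hard boundary have radius of order $\mathrm{dist}(z,\partial D)$ or larger and contribute $O(\eps)$, which is negligible against the leading orders $\eps^{d-\beta}$ and $-\eps\ln\eps$ respectively, exactly as in the paper.

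The genuine gap is the regime $\beta<d-1$, and you name it yourself without closing it. There the boundary contribution is of the \emph{same} order $\eps$ as the leading term, so the limiting constant absorbs the balls of radius between $\mathrm{dist}(z,\partial D)$ and $N$ whose admissible centres form the thin shell around $\partial B(z,u)$ intersected with the inner parallel body $\{x:B(x,u)\subset D\}$; the resulting density involves an expression of the type $\int_{\S^{d-1}}|\omega\cdot\theta|\,\Id(B(z+u\omega,u)\subset D)\,\sigma(d\omega)$, which a priori depends on the increment direction $\theta=(s-s')/|s-s'|$. Without an argument removing this dependence you obtain only some $1/2$-selfsimilar Gaussian tangent field with stationary increments, whereas the assertion to be proved in this regime is precisely that the tangent field is a multiple of (isotropic) fractional Brownian motion with the fBm covariance; declaring this ``the crux'' and stopping leaves the theorem unproved for $\beta<d-1$. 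The paper closes exactly this step by a comparison with Lemma~\ref{lemma}: it shows $\Psi(t,D)\le\widetilde\Psi(N,t)$ and argues that one can choose a cutoff $M=M(D,\beta,z)$, \emph{independent of $t$}, with $\widetilde\Psi(M,t)=\Psi(t,D)$, i.e.\ that in the limit the hard boundary acts like a radius truncation at a level not depending on the direction of the increment; since the lemma gives $\widetilde\Psi(M,t)=c\,|t|^{2H}$ with $c$ independent of $t$, the fBm covariance then follows from polarization. A complete version of your proof must supply an argument of this kind (or some equivalent isotropization of the macroscopic-ball contribution) for $\beta<d-1$; by contrast, your worry about uniformity of the logarithm at $\beta=d-1$ is harmless, since there the boundary term is genuinely lower order.
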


\begin{lemma}\label{lemma}
  Let $\tau(\eps)$ and $H$ be as in Theorem~\ref{T:LASS}. For $M \geq
  0$ and $t\in\R^d$, define
\equ[E:PsiTilde]{ 
\widetilde \Psi(M,t) = \lim_{\eps \rightarrow 0} 
\tau(\eps)^2 \int_{\R^d} \int_0^M |\delta_{\eps
      t}(B(x,u)) - \delta_0(B(x,u))| u^{-\beta-1} \,du dx. } 
Then
  $\widetilde \Psi(M, t) = c \ |t|^{2H}$, where $0 \leq c < \infty$
  is a constant depending on $M$ and $\beta$ but independent of $t$.
\end{lemma}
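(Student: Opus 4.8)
The plan is to collapse $\widetilde\Psi(M,t)$ to a one-dimensional integral whose small-$\eps$ behaviour can be read off, and then to match that behaviour against each of the three choices of $\tau$ in (\ref{E:33}). Since $\delta_{\eps t}(B(x,u))=\ind_{\{|x-\eps t|\le u\}}$ and $\delta_0(B(x,u))=\ind_{\{|x|\le u\}}$, we have $|\delta_{\eps t}(B(x,u))-\delta_0(B(x,u))|=\ind_{B(\eps t,u)\setDiff B(0,u)}(x)$, so that
\[
\int_{\R^d}|\delta_{\eps t}(B(x,u))-\delta_0(B(x,u))|\,dx=|B(\eps t,u)\setDiff B(0,u)|=2\bigl(v_du^d-|B(\eps t,u)\cap B(0,u)|\bigr).
\]
Using the scaling $|B(y,u)\cap B(y',u)|=u^dV(|y-y'|/u)$ with $V$ the volume function (\ref{defvolfunction}) and $V(0)=v_d$, and writing $W(v)=v_d-V(v)$, the inner integral in (\ref{E:PsiTilde}) equals $2\int_0^Mu^{d-\beta-1}W(\eps|t|/u)\,du$. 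The substitution $u=\eps|t|/v$ turns this into $2(\eps|t|)^{d-\beta}J(\eps|t|/M)$, where
\[
J(r)=\int_r^\infty v^{\beta-d-1}W(v)\,dv,\qquad r>0,
\]
so that $\widetilde\Psi(M,t)=2\lim_{\eps\to0}\tau(\eps)^2(\eps|t|)^{d-\beta}J(\eps|t|/M)$, and the whole problem reduces to the asymptotics of $J(r)$ as $r\to0^+$.

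The function $W$ is nondecreasing with $W(0)=0$, $W(v)=v_d$ for $v\ge2$, and $0\le W(v)\le\min(v_{d-1}v,\,v_d)$; moreover $W(v)=v_{d-1}v+O(v^3)$ as $v\to0^+$, directly from (\ref{defvolfunction}). Splitting $J(r)=\int_r^1+\int_1^\infty$ and replacing $W(v)$ by $v_{d-1}v$ in the first piece, up to an error that integrates to $o(r^{\beta-d+1})$ when $\beta<d-1$ and to $O(1)$ otherwise, one obtains three regimes: if $d-1<\beta<d$ then $v\mapsto v^{\beta-d-1}W(v)$ is integrable at both ends and $J(r)\to I:=\int_0^\infty v^{\beta-d-1}W(v)\,dv\in(0,\infty)$; if $\beta=d-1$ then $J(r)=-v_{d-1}\ln r+O(1)$; and if $\beta<d-1$ then $J(r)\sim \tfrac{v_{d-1}}{d-\beta-1}\,r^{\beta-d+1}$.

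It remains to insert these into $\widetilde\Psi(M,t)=2\lim_{\eps\to0}\tau(\eps)^2(\eps|t|)^{d-\beta}J(\eps|t|/M)$ with the matching $\tau$ from (\ref{E:33}). For $d-1<\beta<d$ one has $\tau(\eps)^2(\eps|t|)^{d-\beta}=|t|^{d-\beta}$, hence $\widetilde\Psi(M,t)=2I|t|^{d-\beta}=2I|t|^{2H}$. For $\beta=d-1$ (so $d-\beta=1$, $\tau(\eps)^2=(-\eps\ln\eps)^{-1}$) one gets $\tau(\eps)^2(\eps|t|)^{d-\beta}J(\eps|t|/M)=\tfrac{|t|}{-\ln\eps}\bigl(-v_{d-1}\ln(\eps|t|/M)+O(1)\bigr)\to v_{d-1}|t|$, so $\widetilde\Psi(M,t)=2v_{d-1}|t|=2v_{d-1}|t|^{2H}$. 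For $\beta<d-1$ (so $\tau(\eps)^2=\eps^{-1}$) the powers of $\eps$ cancel and $\widetilde\Psi(M,t)=\tfrac{2v_{d-1}}{d-\beta-1}|t|\,M^{d-\beta-1}=\tfrac{2v_{d-1}M^{d-\beta-1}}{d-\beta-1}|t|^{2H}$. In every case $\widetilde\Psi(M,t)=c|t|^{2H}$ with $0\le c<\infty$ depending only on $\beta$ and $M$ (and only on $\beta$ in the first two cases); in particular $c=0$ when $M=0$.

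The main obstacle is the borderline case $\beta=d-1$: one must keep the error term in $J(r)=-v_{d-1}\ln r+O(1)$ explicit enough to see that it absorbs all of the $t$- and $M$-dependence, and that this dependence is then killed by the $(\ln\eps)^{-1}$ prefactor, leaving a $t$-independent constant. Everything else is routine: finiteness of (\ref{E:PsiTilde}) for fixed $\eps$ follows from $W(v)\le\min(v_{d-1}v,v_d)$, and the interchange of limit and integration is justified by dominated convergence on $\int_1^\infty$ together with the same elementary bound on $W$ near the origin.
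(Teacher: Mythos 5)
Your proof is correct and takes essentially the same route as the paper's: reduce the inner $dx$-integral to the symmetric-difference volume $|B(\eps t,u)\setDiff B(0,u)|=2u^{d}W(\eps|t|/u)$, rescale to a one-dimensional integral in the ratio $\eps|t|/u$, and evaluate the three regimes of $\beta$ against the matching scaling functions $\tau(\eps)$. The only differences are cosmetic: the paper normalizes to unit balls and controls the symmetric difference by $\mathrm{const}\,\min\{1,u^{-1}\}$ via L'Hospital, while you use the bound $W(v)\le\min(v_{d-1}v,v_d)$ together with $W(v)=v_{d-1}v+O(v^{3})$ and track the error terms (in particular at the borderline case $\beta=d-1$) explicitly, which if anything makes the argument slightly tighter.
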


\begin{proof}
By a change of the order of integration in~\eqref{E:PsiTilde} we obtain
\begin{align*}
\widetilde \Psi(M,t)
&= \lim_{\eps \rightarrow 0} \tau(\eps)^2 \int_0^M u^{-\beta-1} 
|B(\eps t, u) \setDiff B(0,u)| \,du \\
&= \lim_{\eps \rightarrow 0} \tau(\eps)^2 \int_0^M u^{d-\beta-1}
|B(\eps t/u,1) \setDiff B(0, 1)| \,du.
  \end{align*}
Hence,
\[ 
\widetilde \Psi(M,t) = \lim_{\eps \rightarrow 0} \tau(\eps)^2\eps^{d-\beta} 
|t|^{d-\beta} \int_0^{\frac{M}{\eps |t|}} u^{d-\beta-1} |B(e / u, 1)) \setDiff B(0, 1)| \,du 
\]
for some $e \in \S^{d-1}$.  Using the function 
$V(u)$ in (\ref{defvolfunction}) for the volume of the intersection of
two balls of radius $1$ and center distance $u$,  
\[ 
|B(e/ u, 1) \setDiff B(0, 1)| = \begin{cases}
2 V(0),&    u \leq 1/2,\\ 
2 (V(0) - V(1/u)), & u > 1/2.
\end{cases}
\]
By L'Hospital's rule
\al{
  \lim_{u \rightarrow \infty} u \,|B(e / u, 1) \setDiff
    B(0, 1)| 
= \lim_{u \rightarrow \infty} 4 \nu_{d-1}u \int_0^{1/(2u)} (1-s^2)^{\frac{d-1}{2}}\,ds 
      = 2 v_{d-1}
  }
and so
\[ 
\widetilde \Psi(M,t) = \mathrm{const} 
\lim_{\eps \rightarrow 0} \tau(\eps)^2 \eps^{d-\beta} |t|^{d-\beta}
\int_0^{M/(\eps |t|)} u^{d-\beta-1} \min\{1, u^{-1} \}\, du. 
\]
By  evaluating this integral expression separately for the three
different intervals of $\beta$ and the corresponding scaling functions
$\tau(\eps)$, we obtain 
\[ 
\widetilde \Psi(M,t) = \mathrm{const}\ |t|^{2H}
\]
for the choice the of Hurst index stated in Theorem \ref{T:LASS}. 
\end{proof}

\begin{proof}[Proof of Theorem~\ref{T:LASS}]
  We note that the limit of a sequence of Gaussian processes is
  Gaussian and that Gaussian processes are determined by their
  two-dimensional distributions. Fix an element $z \in D$. We define
  formally
  \begin{align}
    T'(t) &= \lim_{\eps \rightarrow 0} \tau(\eps) 
( W_\beta(z + \eps t) - W_\beta(z) ) \label{E:34} \\
      &= \lim_{\eps \rightarrow 0} \int_{\R^d\times\R_+} 
\tau(\eps) (\delta_{z + \eps t} - \delta_z)(B(x,u)) \,M_\beta(dx, du). \notag
  \end{align}
Then the covariance of $T'$ is given by
  \al{ 
&\E T'(s) T'(t) = \lim_{\eps \rightarrow 0} \tau(\eps)^2 
\int_{\R^d\times \R_+} (\delta_{z+\eps s} - \delta_z)(\delta_{z + \eps t} -
    \delta_z)(B(x,u))\, \nu^D(dx,du). 
}
We have
  \al{ 
(\delta_{z+\eps s} - \delta_z)&(\delta_{z+\eps t} - \delta_z) =
\frac{1}{2} (
    (\delta_{z+\eps s} - \delta_z)^2 + (\delta_{z+\eps t} -
    \delta_z)^2 - (\delta_{z+\eps s} - \delta_{z+\eps t})^2 ). 
}
Moreover, 
\[
(\delta_{z+\eps s} - \delta_{z+\eps t})(B(x,u)) =
  (\delta_{z + \eps s - \eps t} - \delta_z)(B(x - \eps t, u)),
\]
and thus
\al{ & \int_{\R^d\times\R_+} (\delta_{z+\eps s} - \delta_{z+\eps t})^2(B(x,u))
    u^{-\beta-1} \Id(B(x,u) \subset D) \,du dx \\ 
&\qquad \qquad = \int_{\R^d\times\R_+} (\delta_{z + \eps (s - t)} - \delta_z)^2(B(x, u))
    u^{-\beta-1} \Id(B(x,u) \subset D- \eps t)\, dx du. 
} 
We obtain
  \equ[E:35] { \E T'(s) T'(t) = \frac{1}{2} \left( \Psi(s,D) +
    \Psi(t,D) - \Psi(s-t , D-\eps t) \right) } 
with
\begin{align*} 
    \Psi(t,D) &= \lim_{\eps \rightarrow 0} \tau(\eps)^2
    \int_{\R^d\times\R_+} (\delta_{z+\eps t} - \delta_z)^2(B(x,u)) 
u^{-\beta-1} \Id(B(x,u) \subset D) \,du dx \\
  &= \lim_{\eps \rightarrow 0} \tau(\eps)^2 \int_{\R^d\times\R_+} 
(\delta_{\eps t} - \delta_0)^2(B(x,u)) u^{-\beta-1} \Id(B(x,u) \subset D - z)\ dudx.
  \end{align*}
We evaluate $\Psi(t,D)$. The third term in (\ref{E:35}) then requires
only small modifications because we have to work with $D-\eps t$
instead of $D$. Recalling the definition of $\widetilde \Psi(M,t)$
in~\eqref{E:PsiTilde}, $\widetilde \Psi(\cdot,t)$ is a continuous,
monotone increasing function with $\widetilde \Psi(0,t) = 0$.  Since
$D$ is bounded there is an $N > 0$ which only depends on $D$ such that
$\Id(B(x, u) \subset D) = 0$ for all $u > N$ and $x \in \R^d$. Hence,
$\Psi(t,D) \leq \widetilde \Psi(N,t)$.  A careful reading of the proof
of Lemma \ref{lemma} shows that, for any $v \in \R$, we may replace
$M$ by $M + \eps v$ in the right hand side of~\eqref{E:PsiTilde}
without changing the constant in $\widetilde \Psi(M, t) =
\mathrm{const} \ |t|^{2H}$.  Therefore we can find an $M$, $0 \leq M
\leq N$, depending on $D$, $\beta$, and $z$, but independent of $t$,
such that $\widetilde \Psi(M,t) = \Psi(t,D)$. Hence by Lemma
\ref{lemma}, $\Psi(t,D) = c \ |t|^{2H}$, with a constant $c$
independent of $t$.  It follows immediately that $\Psi(s,D) = c \,
|s|^{2H}$ and $\Psi(s-t,D-\eps t) = c \,|s-t|^{2H}$, and hence
  \[ 
\E T'(s) T'(t) = c \ ( |s|^{2H} + |t|^{2H} - |s-t|^{2H} ). \qedhere 
\]
\end{proof}


\subsection*{The hard boundary thinning bridge on $[0,T]$}

We consider the Gaussian membrane $W_\beta$ for the special case
$d=1$ and $D = (0,T)$, some $T > 0$.  By~\eqref{covdomain},
\begin{align*} 
\E W_\beta(s) W_\beta(t)&=\int_0^T \int_0^\infty
\Id(0<x-u<s,t<x+u<T)\, u^{-\beta-1} du dx\\
&=f_\beta(s \vee t) + f_\beta(T-s \wedge t) - f_\beta(|s - t|) -
f_\beta(T), 
\end{align*}
where 
\[
f_\beta(x) =
\begin{cases} 
\frac{2^\beta}{\beta(1-\beta)} x^{1-\beta},& \beta<d, \beta\not=0,\\
- x \ln x, & \beta=0.
\end{cases}
\]
We point out that the case $\beta = -1$ is the classical Brownian bridge on $[0,T]$ with covariance function
\[ 
\E W_{-1}(s) W_{-1}(t) = \frac{1}{2} \,s \wedge t\,(T- s\vee t). 
\]
For any other value of the parameter $\beta$, however, the hard
boundary Gaussian bridge is different from the fractional Brownian
bridge on $[0,T]$ obtained as fractional Brownian
  motion pinned to zero at time $T$.
To conclude we provide an additional result on the relation between the hard
boundary bridge and fractional Brownian motion.

\begin{proposition}
  Let $B=(B_t)_{t \in [0,T]}$ be standard linear Brownian motion independent from $W_\beta$ and define the Gaussian martingale $Y_\beta=(Y_\beta(t))_{t \in [0,T]}$ by
  \equ[E:Y]{ 
Y_\beta(t) = \sqrt{\frac{2^\beta}{\beta}} \int_0^t \sqrt{x^{-\beta} +
  (T-x)^{-\beta}} dB_x. 
}
  Then, for $0 < \beta < 1$, $W_\beta + Y_\beta$ is (up to constant) a fractional Brownian motion with Hurst index $H = (1-\beta)/2$.
\end{proposition}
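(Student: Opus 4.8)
The plan is to compute covariances directly. Since $W_\beta$ and $Y_\beta$ are independent centered Gaussian processes, $W_\beta+Y_\beta$ is again centered Gaussian and its covariance is the sum of the two covariances; as a centered Gaussian process is determined by its covariance, it then suffices to check that this sum equals the fractional Brownian covariance up to a multiplicative constant. Throughout I fix $0<\beta<1$ and set $H=(1-\beta)/2\in(0,1/2)$, so that $1-\beta=2H$.

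First I would record the covariance of $W_\beta$ from the formula derived just above the proposition: for $0<\beta<1$,
\[
\E W_\beta(s) W_\beta(t) = f_\beta(s\vee t) + f_\beta(T-s\wedge t) - f_\beta(|s-t|) - f_\beta(T),\qquad f_\beta(x)=\frac{2^\beta}{\beta(1-\beta)}\,x^{1-\beta}.
\]
Next I would compute the covariance of $Y_\beta$. As a stochastic integral of a deterministic integrand against Brownian motion, $Y_\beta$ satisfies the isometry
\[
\E Y_\beta(s) Y_\beta(t) = \frac{2^\beta}{\beta}\int_0^{s\wedge t}\bigl(x^{-\beta}+(T-x)^{-\beta}\bigr)\,dx,
\]
the integral being finite precisely because $0<\beta<1$ makes $x^{-\beta}$ integrable at the origin (this is also what makes $Y_\beta$ well-defined). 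Evaluating the elementary primitives $\int_0^a x^{-\beta}dx=a^{1-\beta}/(1-\beta)$ and $\int_0^a (T-x)^{-\beta}dx=(T^{1-\beta}-(T-a)^{1-\beta})/(1-\beta)$ gives
\[
\E Y_\beta(s) Y_\beta(t) = \frac{2^\beta}{\beta(1-\beta)}\Bigl[(s\wedge t)^{1-\beta}+T^{1-\beta}-(T-s\wedge t)^{1-\beta}\Bigr]= f_\beta(s\wedge t)+f_\beta(T)-f_\beta(T-s\wedge t).
\]

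Then I would add the two covariances. By independence and centering, $\E(W_\beta+Y_\beta)(s)(W_\beta+Y_\beta)(t)=\E W_\beta(s)W_\beta(t)+\E Y_\beta(s)Y_\beta(t)$; the terms $f_\beta(T)$ and $f_\beta(T-s\wedge t)$ occur with opposite signs in the two pieces and cancel, leaving
\[
\E(W_\beta+Y_\beta)(s)(W_\beta+Y_\beta)(t)= f_\beta(s\vee t)+f_\beta(s\wedge t)-f_\beta(|s-t|).
\]
Finally, since $\{s\vee t,s\wedge t\}=\{s,t\}$ we have $(s\vee t)^{1-\beta}+(s\wedge t)^{1-\beta}=s^{1-\beta}+t^{1-\beta}$, so this equals $\frac{2^\beta}{\beta(1-\beta)}\bigl(s^{2H}+t^{2H}-|s-t|^{2H}\bigr)$, which is a positive constant times the standard fractional Brownian covariance. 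Being centered Gaussian with this covariance, $W_\beta+Y_\beta$ is a constant multiple of fractional Brownian motion with Hurst index $H$, as claimed.

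There is no substantial obstacle here; the only point requiring care is verifying that the $f_\beta(T)$ and $f_\beta(T-s\wedge t)$ contributions coming from $W_\beta$ and from $Y_\beta$ indeed carry opposite signs and thus cancel exactly — this cancellation is the algebraic heart of why this particular martingale $Y_\beta$ is the one that "straightens" the hard-boundary bridge into fractional Brownian motion.
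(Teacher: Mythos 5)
Your proposal is correct and follows essentially the same route as the paper: independence gives additivity of the covariances, the It\^o isometry yields $\E Y_\beta(s)Y_\beta(t)=\frac{2^\beta}{\beta(1-\beta)}\bigl((s\wedge t)^{1-\beta}+T^{1-\beta}-(T-s\wedge t)^{1-\beta}\bigr)$, and adding this to the hard-boundary bridge covariance cancels the boundary terms, leaving a constant multiple of $s^{2H}+t^{2H}-|s-t|^{2H}$. The only cosmetic difference is that the paper phrases the cancellation by writing $\E W_\beta(s)W_\beta(t)$ as $\frac{2^\beta}{\beta(1-\beta)}\bigl(C_\beta(s,t)-C_\beta(s\wedge t,T)\bigr)$ and checking $\E Y_\beta(s)Y_\beta(t)$ supplies exactly the missing $C_\beta(s\wedge t,T)$ term, which is the same algebra you carry out explicitly.
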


\begin{proof}
  Put $Z_\beta = W_\beta + Y_\beta$. As the sum of two independent
  Gaussian processes, $Z_\beta$ is a Gaussian process as well.  Thus, it
  is enough to show that the covariance $\E
  Z_\beta(s) Z_\beta(t)$ is given by a multiple of $C_\beta(s,t)$, with
  \[ 
C_\beta(s,t) = s^{1-\beta} + t^{1-\beta} - |s-t|^{1-\beta}. 
\]
It is easily seen that
\[ 
\E W_\beta(s) W_\beta(t) = \frac{2^\beta}{\beta(1-\beta)} (
C_\beta(s,t) - C_\beta(s \wedge t,T) ). 
\]
  Since $\E Z_\beta(s) Z_\beta(t) = \E W_\beta(s) W_\beta(t) + \E Y_\beta(s) Y_\beta(t)$ for all $s,t \in [0,T]$, it is enough to show that $\E Y_\beta(s) Y_\beta(t) = 2^\beta C_\beta(s \wedge t,T) / (\beta(1-\beta))$. In fact,
  \al{
    \E Y_\beta(s) Y_\beta(t) &= \frac{2^\beta}{\beta} \int_0^{s \wedge t} x^{-\beta} + (T-x)^{-\beta} dx \\
                             &= \frac{2^\beta}{\beta(1-\beta)} \left( (s \wedge t)^{1-\beta} - (T - s \wedge t)^{1-\beta} + T^{1-\beta} \right) \\
                             &= \frac{2^\beta}{\beta(1-\beta)} C_\beta(s \wedge t,T). \qedhere }
\end{proof}

We may extend the definition of $Y_\beta$ in~\eqref{E:Y} to $-1 <
\beta < 0$. Then $Y_\beta$ becomes a purely imaginary Gaussian
process. Setting $Z_\beta = W_\beta + Y_\beta$ as before yields a
complex (centered) Gaussian process with
  \[ \E Z_\beta(s) Z_\beta(t) = \E Z_\beta(s) \overline{Z_\beta(t)} = \frac{2^\beta}{\beta(1-\beta)} C_\beta(s, t). \]


\bibliographystyle{plain}
\bibliography{gaussianFieldsRefs}


\end{document}